\theoremstyle{plain}
\newtheorem{theorem}{Theorem}
\newtheorem{lemma}{Lemma}
\newtheorem{corollary}{Corollary}
\newtheorem{proposition}{Proposition}
\theoremstyle{definition}
\newtheorem{definition}{Definition}
\numberwithin{theorem}{section}
\numberwithin{lemma}{section}
\numberwithin{corollary}{section}
\numberwithin{proposition}{section}
\numberwithin{definition}{section}
\numberwithin{equation}{section}
\renewcommand{\vec}[1]{\bm{#1}}
\DeclarePairedDelimiter{\brax}{(}{)}
\DeclarePairedDelimiter{\sqbrax}{[}{]}
\DeclarePairedDelimiter{\setbrax}{\{}{\}}
\DeclarePairedDelimiter{\set}{\{}{\}}
\DeclarePairedDelimiter{\abs}{\lvert}{\rvert}
\DeclarePairedDelimiter{\normDoubleBar}{\lVert}{\rVert}
\newcommand{\clsd}[2]{\sqbrax{#1 , #2}}
\newcommand{\suchthat}{:}
\newcommand{\grad}{\vec{\nabla}}
\newcommand{\bbN}[0]{\mathbb{N}}
\newcommand{\bbP}[0]{\mathbb{P}}
\newcommand{\bbQ}[0]{\mathbb{Q}}
\newcommand{\bbR}[0]{\mathbb{R}}
\newcommand{\bbZ}[0]{\mathbb{Z}}
\newcommand{\calK}[0]{{\mathcal K}}
\newcommand{\frakI}[0]{\mathfrak{I}}
\newcommand{\frakS}[0]{\mathfrak{S}}
\newcommand{\eqdef}{=}
\newcommand{\projdim}{\dim}
\newcommand{\affdim}{\dim}
\newcommand{\cancellation}{{\mathscr C}}
\newcommand{\rank}{\operatorname{rank}}
\newcommand{\sing}{\operatorname{Sing}}
\newcommand{\supnorm}[1]{\normDoubleBar{#1}_\infty}
\newcommand{\supnormBig}[1]{\normDoubleBar[\Big]{#1}_\infty}
\newcommand{\weightbox}{\mathscr{B}}
\newcommand{\singSeries}{\frakS}
\newcommand{\singIntegralBox}{\frakI}
\newcommand{\gradSomethingMultilinear}[2]{\vec{m}^{(#1)} \brax{ #2 }}
\newcommand{\vecsuper}[2]{\vec{#1}^{(#2)}}
\newcommand{\numZeroesInBoxOf}[1]{N_{#1}}
\newcommand{\auxIneqOfCNumSolns}{N^{\operatorname{aux}}_{c}}
\newcommand{\auxIneqOfSomethingNumSolns}[1]{N^{\operatorname{aux}}_{#1}}
\newcommand{\betaDotC}{\vec{\beta}\cdot\vec{c}}
\newcommand{\cubicMatrixAt}[1]{H_{c}\brax{#1}}
\newcommand{\cubicMatrix}{\cubicMatrixAt{\vec{x}}}
\newcommand{\cubicMinorsAt}[2]{\vec{D}^{(c,#1)}\brax{#2} }
\newcommand{\cubicEigenvalueAt}[2]{\lambda_{c,#1}\brax{ #2 }}
\newcommand{\cubicEigenvalue}[1]{\cubicEigenvalueAt{ #1 }{ \vec{x} }}
\newcommand{\cubicMinors}[1]{\cubicMinorsAt{ #1 }{ \vec{x} }}
\newcommand{\cubicMinorsJacobianAt}[2]{J_{\vec{D}^{(c,#1)}}\brax{ #2 }}
\newcommand{\cubicMinorsJacobian}[1]{\cubicMinorsJacobianAt{ #1 }{ \vec{x} }}
\newcommand{\minors}[1]{\vec{D}^{(#1)} }
\newcommand{\minorsMatrix}[2]{#1^{[#2]}}
\begin{document}

				\title{Systems of cubic forms in many variables}
				\author{S. L. Rydin Myerson}
				\maketitle

\begin{abstract}
We consider a system of $R$ cubic forms in $n$ variables, with integer coefficients, which define a smooth complete intersection in projective space. Provided $n\geq 25R$, we prove an asymptotic formula for the number of integer points in an expanding box at which these forms simultaneously vanish. In particular we can handle systems of forms in $O(R)$ variables, previous work having required that $n \gg R^2$.  One conjectures that $n \geq 6R+1$ should be sufficient. We reduce the problem to an upper bound for the number of solutions to a certain auxiliary inequality. To prove this bound we adapt a method of Davenport.
\end{abstract}

		\tableofcontents

		\section{Introduction}

	\subsection{Main result}\label{2.sec:main_result}

Let $c_1(\vec{x}),\dotsc,c_R(\vec{x})$ be homogeneous cubic forms in $n$ variables $x_1,\dotsc,x_n$, with integer coefficients. We treat the simultaneous Diophantine equations
\begin{equation*}
c_1(\vec{x}) = 0,\quad \dotsc, \quad c_R(\vec{x})=0
\end{equation*}
and the corresponding projective variety in $\bbP_\bbQ^{n-1},$ which we call $V(c_1,\dots,c_R) $. We assume throughout that the $c_i$ generate the ideal of $V(c_1,\dots,c_R),$ and are linearly independent. The cubic case of a classic result of Birch gives us:

\begin{theorem}[Birch~\cite{birchFormsManyVars}]\label{2.thm:birch's_result_short}
Let $\weightbox$ be a box in $\bbR^n,$ contained in the box $\clsd{-1}{1}^R,$ and having sides of length at most 1 which are parallel to the coordinate axes. For each $P\geq 1,$ write
\begin{equation*}
\numZeroesInBoxOf{c_1,\dotsc,c_R}(P)
=
\#
\set{ \vec{x}\in\bbZ^n \suchthat
	\vec{x}/P\in\weightbox,\,
	c_1(\vec{x})=0, \dotsc,c_R(\vec{x})=0
}.
\end{equation*}
If the variety $V(c_1,\dots,c_R)$  is a smooth complete intersection, and we have
\begin{equation}
n
\geq
8R^2+9R,
\label{2.eqn:birch's_condition_short}
\end{equation}
then for some  $\singIntegralBox,$ $\singSeries\geq 0$ and  all $P\geq 1,$ the bound
\begin{equation}\label{2.eqn:HL_formula}
\numZeroesInBoxOf{c_1,\dotsc,c_R}(P)
=
\singIntegralBox\singSeries P^{n-3R}
+
O\brax{P^{n-3R-\delta}}
\end{equation}
holds, where the implicit constant depends only on the forms $c_i,$ and $\delta$ is a positive real number depending only on $d$ and $R$. If the variety $V(c_1,\dotsc,c_R)$ has a smooth point over $\bbQ_p$ for each prime $p,$ and a real point whose homogeneous co-ordinates lie in $\weightbox,$ then $\singSeries $ and  $\singIntegralBox$ are positive.
\end{theorem}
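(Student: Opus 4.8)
The plan is to deduce Theorem~\ref{2.thm:birch's_result_short} as the degree-$3$ case of Birch's circle-method theorem on systems of forms of odd degree \cite{birchFormsManyVars}. Recall the structure of that argument: one writes $\numZeroesInBoxOf{c_1,\dotsc,c_R}(P)=\int_{[0,1)^R}S(\vec\alpha)\,d\vec\alpha$ with $S(\vec\alpha)=\sum_{\vec x/P\in\weightbox}e\brax{\alpha_1 c_1(\vec x)+\dotsb+\alpha_R c_R(\vec x)}$, dissects $[0,1)^R$ into major and minor arcs, and on the major arcs approximates $S$ by a complete exponential sum times an oscillatory integral, summing over the approximating rationals to produce the main term $\singIntegralBox\singSeries P^{n-3R}$ with $\singIntegralBox$ a singular integral and $\singSeries=\prod_p\sigma_p$ a product of $p$-adic densities. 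On the minor arcs one bounds $S(\vec\alpha)$ by Weyl differencing the cubic $\betaDotC$ for an appropriate $\vec\beta$ — for cubics one differences twice, as in Davenport's treatment of a single cubic form — which reduces matters to counting integer points of size $O(P)$ at which a bilinear system attached to $\betaDotC$ nearly vanishes. The size of that count, hence the saving $P^{-\delta}$ and the absolute convergence of $\singSeries$, is governed by the affine dimension of the Birch singular locus
\[
V^*(\vec c)=\set*{\vec x\in\mathbb A^n_{\overline{\bbQ}}\suchthat \rank\brax*{\tfrac{\partial c_i}{\partial x_j}(\vec x)}_{\substack{1\le i\le R\\ 1\le j\le n}}<R},
\]
and Birch's theorem yields \eqref{2.eqn:HL_formula}, with $\singSeries$ convergent and $\delta>0$, once $n-\affdim V^*(\vec c)>(d-1)2^{d-1}R(R+1)$, which for $d=3$ reads $n-\affdim V^*(\vec c)>8R(R+1)$.

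So the one substantive point is geometric: to bound $\affdim V^*(\vec c)$ when $V(c_1,\dotsc,c_R)$ is a smooth complete intersection, and here the answer is immediate. The Jacobian criterion says that $V(c_1,\dotsc,c_R)$ is nonsingular at a point exactly when the matrix $\brax*{\partial c_i/\partial x_j}_{i,j}$ has rank $R$ there; so smoothness of the complete intersection means that, as a subvariety of $\bbP^{n-1}$, $V(c_1,\dotsc,c_R)$ is disjoint from the projectivisation of the cone $V^*(\vec c)$. Since $V(c_1,\dotsc,c_R)$ has codimension $R$ in $\bbP^{n-1}$, the projective dimension theorem forces the projectivisation of $V^*(\vec c)$ to have dimension at most $R-1$, i.e. $\affdim V^*(\vec c)\le R$. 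Feeding this into Birch's requirement $n-\affdim V^*(\vec c)>8R(R+1)$, it suffices that $n>8R^2+8R+R$ — which, up to the endpoint, is precisely \eqref{2.eqn:birch's_condition_short}.

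It remains to match the two subsidiary claims against \cite{birchFormsManyVars}. That $\delta$ depends only on $d$ and $R$ holds because, for fixed $d$, Birch's $\delta$ is a non-decreasing function of the excess $n-\affdim V^*(\vec c)$; replacing that excess by the lower bound just obtained (which depends on $R$ alone) only shrinks $\delta$, and a smaller positive $\delta$ still gives a valid $O(P^{n-3R-\delta})$. For the positivity: under the rank condition $\singSeries=\prod_p\sigma_p$ converges absolutely and $\sigma_p=1+O(p^{-1-\varepsilon})$ outside a finite set, so $\singSeries>0$ iff every $\sigma_p>0$, which by Hensel's lemma is iff the system has a smooth $\bbQ_p$-point; and since $V(c_1,\dotsc,c_R)$ is smooth, a real point of it with homogeneous coordinates in $\weightbox$ is automatically nonsingular, which is exactly what forces $\singIntegralBox>0$. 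Since the statement is essentially Birch's theorem restated for smooth complete intersections, I do not anticipate a genuine obstacle: the geometry is the one-line disjointness observation above, and what remains — pinning down the exact form of Birch's rank threshold and of $\delta$, and verifying the local positivity criteria — is bookkeeping.
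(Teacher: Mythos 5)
Your route is the same as the paper's: Theorem~\ref{2.thm:birch's_result_short} is deduced from Theorem~1 of Birch \cite{birchFormsManyVars}, and the only substantive input is a bound for the dimension of Birch's locus $V^\ast$ when $V(c_1,\dotsc,c_R)$ is a smooth complete intersection. However, your geometric step falls one short of what the stated threshold requires. Disjointness of $V(c_1,\dotsc,c_R)$ from the projectivisation of $V^\ast$ plus the projective dimension theorem gives only $\affdim V^\ast \le R$ (projective dimension at most $R-1$), and since Birch's hypothesis is the \emph{strict} inequality $n-\affdim V^\ast > 8R(R+1)$ for $d=3$, this yields the asymptotic only for $n \ge 8R^2+9R+1$. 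The endpoint $n=8R^2+9R$ is part of the statement, so your parenthetical ``up to the endpoint'' is precisely the gap, not a harmless rounding issue. The paper instead inserts the sharper bound $\affdim V^\ast \le R-1$, quoted from Lemma~3.1 of Browning and Heath-Brown \cite{browningHeathBrownDiffDegrees}, with details in \cite[Lemma~\ref{1.lem:nonsing_case}]{quadSystemsManyVars}; this is genuinely stronger than what the dimension theorem alone can give. Already for $R=1$, smoothness forces $V^\ast=\set{\vec{0}}$, of affine dimension $0=R-1$, whereas your argument only gives $\le 1$; in general the improvement uses that every point of the projectivised $V^\ast$ is a singular point of some member $V(\betaDotC)$ of the linear system, together with the fact (used in the paper's proof of Theorem~\ref{2.thm:main_thm_short}) that smoothness of the complete intersection forces $\projdim\sing V(\betaDotC)\le R-2$ for every $\vec{\beta}\neq\vec{0}$, rather than bare disjointness.

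Your remaining points are consistent with the paper, which simply cites Birch for them: the value $8R(R+1)$ of Birch's threshold at $d=3$ is correct, the uniformity of $\delta$ in $d$ and $R$ follows as you say once the excess $n-\affdim V^\ast$ is bounded below in terms of $R$, and the positivity criteria for $\singSeries$ and $\singIntegralBox$ via smooth local points are the standard ones. So the proposal is salvageable either by quoting the Browning--Heath-Brown bound $\affdim V^\ast\le R-1$ in place of your projective-dimension-theorem estimate, or by weakening the theorem's hypothesis by one variable, but as written it does not prove the stated result.
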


In particular this follows from Theorem~1 of Birch~\cite{birchFormsManyVars}, on inserting the bound $\affdim V^\ast \leq R-1$ for the dimension of the variety $V^\ast$ occurring in that result. This bound follows from Lemma~3.1 of Browning and Heath-Brown~\cite{browningHeathBrownDiffDegrees} whenever $V(c_1,\dotsc,c_R)$ is a smooth complete intersection. See \cite[Lemma~\ref{1.lem:nonsing_case}]{quadSystemsManyVars} for details.

We sharpen \eqref{2.eqn:birch's_condition_short} as soon as $R\geq 3$. In \S\ref{2.sec:aux_ineq} we prove:

\begin{theorem}\label{2.thm:main_thm_short}
In Theorem~\ref{2.thm:birch's_result_short} we may replace \eqref{2.eqn:birch's_condition_short} with the condition
\begin{equation}
n
\geq
25R. 
\label{2.eqn:condition_on_n_short}
\end{equation}
\end{theorem}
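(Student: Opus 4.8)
The plan is to run the Hardy--Littlewood circle method for the system $c_1,\dotsc,c_R$ in essentially the shape used by Birch~\cite{birchFormsManyVars}, but to replace his treatment of the minor arcs---which passes through the dimension of the exceptional variety $V^\ast$ and therefore forces $n\gg R^2$---by a direct estimate for an auxiliary counting function, proved by adapting Davenport's work on cubic forms. Write $S(\vec{\alpha})=\sum_{\vec{x}\in\bbZ^n,\ \vec{x}/P\in\weightbox}e\brax{\alpha_1c_1(\vec{x})+\dotsb+\alpha_Rc_R(\vec{x})}$ and dissect the torus $\clsd{0}{1}^R$ into major arcs centred at rationals of small denominator and a complementary set of minor arcs. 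The major arc analysis is the standard one, exactly as in~\cite{birchFormsManyVars}: it produces the main term $\singIntegralBox\singSeries P^{n-3R}$, together with the asserted positivity of $\singIntegralBox$ and $\singSeries$ under the local solubility hypotheses, once $n$ exceeds a suitable multiple of $R$. So the whole content of Theorem~\ref{2.thm:main_thm_short} is that, under~\eqref{2.eqn:condition_on_n_short}, the minor arcs contribute $O\brax{P^{n-3R-\delta}}$.

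On the minor arcs I would apply Weyl differencing twice, as befits forms of degree $3$, to bound a fixed power of $\abs{S(\vec{\alpha})}$ by a power of $P$ times the (suitably normalised) number $\auxIneqOfCNumSolns$ of pairs $(\vec{x},\vec{y})\in\bbZ^n\times\bbZ^n$ with $\abs{\vec{x}},\abs{\vec{y}}\le P$ at which the bilinear forms obtained by differencing $\alpha_1c_1+\dotsb+\alpha_Rc_R$ all lie within $P^{-1}$ of an integer. Collecting these bilinear forms into the symmetric matrix $\cubicMatrix$ of linear forms attached to that combination, so that the constraint on $\vec{y}$ reads $\supnorm{\cubicMatrix\,\vec{y}}<P^{-1}$ after the appropriate scaling, one sees that for each admissible $\vec{x}$ the admissible $\vec{y}$ are the integer points of a symmetric convex body whose shape is governed by the singular values of $\cubicMatrix$, equivalently by the eigenvalues $\cubicEigenvalue{i}$. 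The problem is thereby reduced to an upper bound of the shape $\auxIneqOfCNumSolns\ll P^{2n-K+\varepsilon}$, valid for every $\vec{\alpha}$ not lying on a major arc, with $K$ a suitable multiple of $R$; the crucial feature is that $K$ \emph{linear} in $R$---rather than the $R^2$ implicit in Birch's route---is exactly what the hypothesis $n\ge 25R$ makes available, and accounting for the losses incurred in the two differencings and in the geometry-of-numbers step is what pins the constant at $25$.

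To establish this bound I would adapt Davenport's geometry-of-numbers method. Stratify the range of $\vec{x}$ by the rank profile of $\cubicMatrix$: for each $r$, consider the locus on which the $r\times r$ minors $\cubicMinors{r}$ are all small, equivalently on which at most $r-1$ of the eigenvalues $\cubicEigenvalue{i}$ exceed a prescribed size. On each stratum a reciprocal (dual lattice) estimate of Davenport type bounds the number of admissible $\vec{y}$; while the number of $\vec{x}$ lying in the stratum is controlled by the size of the variety cut out by $\cubicMinors{r}$, for which one invokes that $V(c_1,\dotsc,c_R)$ is a smooth complete intersection---through the bound $\affdim V^\ast\le R-1$ of Lemma~\ref{1.lem:nonsing_case} of~\cite{quadSystemsManyVars} and a refinement of it to the minor loci, where the relevant transversality is recorded by the Jacobian $\cubicMinorsJacobian{r}$. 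The decisive point is that the $R$ cubics must be treated \emph{simultaneously}, through the single $\vec{\alpha}$-dependent matrix $\cubicMatrix$, rather than one form at a time: a form-by-form union bound is precisely what costs Birch a factor $R$ in the exponent and produces the $R^2$ in~\eqref{2.eqn:birch's_condition_short}, whereas the simultaneous treatment keeps the loss linear in $R$. Summing the contributions of the strata, and using Davenport's iteration to bootstrap a weak estimate into the sharp one, then yields $\auxIneqOfCNumSolns\ll P^{2n-K+\varepsilon}$.

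I expect the main obstacle to be carrying out this stratified geometry-of-numbers argument \emph{uniformly in} $\vec{\alpha}\in\clsd{0}{1}^R$: Davenport treats a single form with a single real parameter, whereas here $\cubicMatrix$ varies with the $R$-dimensional parameter $\vec{\alpha}$, so the bound for the number of $\vec{x}$ at which $\cubicMatrix$ has low rank must be arranged so as to survive integration over $\vec{\alpha}$ rather than being applied pointwise. Almost as delicate is the exponent bookkeeping: the constant $25$ in~\eqref{2.eqn:condition_on_n_short} reflects a tight interplay between the number of differencings, the codimensions of the minor loci, and the saving extracted from the dual lattice, and obtaining it---in place of some larger absolute multiple of $R$---requires care at each step. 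By contrast the algebraic-geometric input, namely the dimension bounds for $V^\ast$ and for the loci where $\cubicMinors{r}$ vanishes, is supplied by~\cite{quadSystemsManyVars}, so the real substance of the argument is this analytic adaptation of Davenport's method to a system of forms.
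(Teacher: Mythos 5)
Your overall strategy coincides with the paper's: reduce, via the circle method and Weyl differencing, to a bound for the auxiliary count $\auxIneqOfSomethingNumSolns{\betaDotC}(B)$ that is linear in $R$ in the exponent, and prove that bound by adapting Davenport's geometry of numbers. (The paper does not redo Birch's dissection; it imports the reduction as Theorem~\ref{2.thm:manin} from \cite{quadSystemsManyVars}, and the constant $25$ is then pure bookkeeping: one needs $n+\sigma+\tfrac12\le 2n-8\cancellation$ with $\sigma\le R-1$ and $\cancellation>3R$, i.e.\ $n\geq 25R$; it is not re-extracted from the differencing losses.) However, the core of your sketch --- the proof of the auxiliary bound --- has a genuine gap in the mechanism by which the smooth complete intersection hypothesis enters. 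You propose to bound the number of $\vec{x}$ in each rank stratum of $H_{\betaDotC}(\vec{x})$ by a dimension bound for the loci where the minors vanish, obtained as a ``refinement'' of $\affdim V^\ast\le R-1$ to the minor loci. No such refinement is available: smoothness of $V(c_1,\dotsc,c_R)$ says nothing direct about the determinantal loci of the Hessian of a single real combination $\betaDotC$. What the hypothesis does control is $\projdim\sing V(\betaDotC)\le R-2$ (by intersecting $\sing V(\betaDotC)$ with $V(c_1,\dotsc,c_{R-1})$), and the missing bridge is precisely Davenport's steps (2)--(4) adapted to inequalities: a large count forces a point $\vecsuper{x}{0}$ at which the Jacobian of minors is small on a large subspace, hence subspaces $X,Y$ with $\affdim X+\affdim Y=n+\sigma+1$ on which the trilinear form $\vec{Y}^TH_{\betaDotC}(\vec{X})\vec{Y}'$ is small, and finally a conversion of ``small'' into exact vanishing so as to exhibit a $\sigma$-dimensional piece of $\sing V(\betaDotC)$. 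Your proposal contains none of this chain, and the route you substitute for it would not deliver the contradiction with smoothness.

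The second gap is the uniformity you yourself flag but leave unresolved. The hypothesis of Theorem~\ref{2.thm:manin} is a pointwise bound for every $\vec{\beta}\in\bbR^R$ with a $\vec{\beta}$-independent constant, so the issue is not making the estimate ``survive integration over $\vec{\alpha}$'' but making the implied constant free of the real parameter; since $\vec{\beta}$ is real, the forms $\betaDotC$ have real coefficients, exact rank strata are not integral varieties, and point counts on them cannot be invoked in the naive way. The paper resolves both problems at once by working with the closed cone $\calK=\set{\betaDotC}$, replacing rank conditions by dyadic eigenvalue ranges (the sets $K_k$ of Definition~\ref{2.def:the_sets_K-sub-k}), covering these sets by boxes via a quantitative Taylor-expansion/Jacobian argument (Lemma~\ref{2.lem:union_of_cubes}), and then, in Proposition~\ref{2.prop:cubic_aux_ineq}, running a compactness argument: if the bound failed for a sequence $c_N\in\calK$ normalised by $\supnorm{c_N}=1$, a limit $c$ would admit spaces $X,Y$ on which the trilinear form vanishes identically, forcing $\projdim\sing V(c)\ge\sigma_\calK$ and contradicting the definition of $\sigma_\calK$. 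Without this cone-and-limit device (or an explicit quantitative substitute), your stratified argument does not yield an estimate uniform in $\vec{\beta}$, and without the trilinear-form/singular-locus conversion it does not use the smoothness hypothesis at all; these are the two ideas you would need to add.
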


For example when $R=3$ and $V(c_1,c_2,c_3)$ is a smooth complete intersection, Theorem~\ref{2.thm:main_thm_short} applies when $n \geq 75,$ whereas Birch's theorem requires $n \geq 99$.

The ``square-root cancellation" heuristic suggests that in place of \eqref{2.eqn:birch's_condition_short} the condition $n \geq 6R+1$ should suffice, see for example the discussion around formula (1.5) in Browning~\cite{browningSurveyCircleMethod}. By handling systems of forms in $O(R)$ variables we come within a constant factor of this conjecture.

Our strategy is an extension of our previous work~\cite{quadSystemsManyVars}. In forthcoming papers we further generalise this approach to treat systems of $R$ forms with degree $d \geq 2,$ with rational or real coefficients.


	\subsection{Related work}\label{2.sec:related_work}

We begin with the case when the forms $c_i(\vec{x})$ are diagonal.

In the case of a single diagonal form $c,$ Baker~\cite{bakerDiagCubic7Vars} proves that $V(c)$ has a rational point whenever $n \geq 7$.

Br\"udern and Wooley~\cite{brudernWooleyHPPairsDiagCubics13Vars,
brudernWooleyLBPairsDiagCubics13Vars,brudernWooleyHPDiagCubics} treat diagonal systems in $n\geq 6R+1$ variables. This is the best value of $n$ possible with the classical circle method. In particular they prove the Hasse principle for  $V(c_1,\dots,c_R)$ whenever the  $c_i$ are diagonal, $V(c_1,\dots,c_R)$ is smooth and $n\geq 6R+1$. They also prove an asymptotic formula of the type \eqref{2.eqn:HL_formula} whenever $n \geq 6R+3$ holds, or when $R=2$ and $n \geq 14$ holds~\cite{brudernWooleySimultDiagEqs,brudernWooleyAsympPairsDiagEqs,
brudernWooleyAsymptPairsDiagCubics14Vars}. In the case $R=2$ they prove a Hasse principle for certain pairs of diagonal cubics in as few as 11 variables~\cite{brudernWooleyHPPairsDiagCubics11Vars}.




Returning to the case of general (not necessarily diagonal) forms, we consider the case $R=1$. Let $c$ be a  cubic form. Hooley \cite{hooleyOctonaryCubicsII} proves that if $n=8,$ the variety $V(F)$ is smooth, and the box $\weightbox$ does not contain a point at which the Hessian determinant of $F$ vanishes, then the asymptotic formula \eqref{2.eqn:HL_formula} holds. In this work he assumes a Riemann hypothesis for a certain modified Hasse-Weil $L$-function. When $n=9$ he proves the same result unconditionally, the weaker error term $o(P^{n-3})$ in place of the $O(P^{n-3-\delta})$ from \eqref{2.eqn:HL_formula}. Heath-Brown~\cite{heathBrownCubics14Vars} proves that if $n \geq 14$  then $V(c)$ always has a rational point, regardless of whether it is singular.

In the case $R=2,$ Dietmann and Wooley~\cite{dietmannWooleyPairsCubics} have shown that $V(c_1,c_2)$ always has a rational point when $n \geq 827,$ whether or not it is smooth. 

In the general case $R\geq 1,$ Schmidt~\cite{schmidtSystemsCubics} shows that $V(c_1,\dots,c_R)$ always has a rational point if $n \geq (10R)^{5}$.  Recent work of Dietmann~\cite{dietmannSystemsCubicForms} improves this condition to $n \geq 400~000 R^4$.

	\subsection{Reduction to an auxiliary inequality}\label{2.sec:aux_ineq}
	
	To prove Theorem~\ref{2.thm:main_thm_short} we will use Theorem~\ref{1.thm:manin} from the author's previous work~\cite{quadSystemsManyVars}. This will reduce the problem to proving an upper bound for the number of solutions to the following \emph{auxiliary inequality}.
	
	\begin{definition}\label{2.def:aux_ineq}
		For any $k \geq 1$ and $\vec{t}\in\bbR^k,$ we write  $\supnorm{\vec{t}} = \max_i \abs{t_i}$ for the supremum norm. When $c(\vec{x})$ is a real cubic form in $n$ variables with real coefficients, we define a symmetric matrix 
		\begin{equation}\label{2.eqn:def_of_H}
		H_{c}(\vec{x})
		=
		\frac{1}{\supnorm{ c }} 
		\brax[\bigg]{ \frac{\partial^3 c(\vec{x})}{ \partial x_i \partial x_j }  }_{1 \leq i,j \leq n}
		\end{equation}
		where  $\supnorm{c} = \frac{1}{6} \max_{i,j,k\in\set{1,\dotsc,n}} \abs[\big]{\frac{\partial^3 c(\vec{x})}{\partial x_i x_j x_k}}$. Thus $H_c(\vec{x})$ is the Hessian of the cubic form $c(\vec{x})/\supnorm{ c },$ which has been normalised so that 1 is the absolute value of its largest coefficient. For each  $B \geq 1$  we put $\auxIneqOfSomethingNumSolns{c} \brax{ B }$ for the number of pairs of vectors $(\vec{x}, \vec{y})\in(\bbZ^n)^2$ with
		\begin{equation*}
		\supnorm{\vec{x}},\supnorm{\vec{y}} \leq B, 
		\qquad
		\supnorm{ H_{c}(\vec{x}) \vec{y} } <  B.
		\end{equation*}
	\end{definition}

	We show that this definition of $\auxIneqOfCNumSolns(B)$ above agrees with the one given in \cite[Definition~\ref{1.def:aux_ineq}]{quadSystemsManyVars}. There we consider a degree $d$ polynomial $f$ and a system of multilinear forms $\gradSomethingMultilinear{ f } {\vec{x}^{(1)},\dotsc,\vec{x}^{(d-1)} },$ and when $d=3$ and $f(\vec{x})= c(\vec{x}),$ we see that
	\[
	\gradSomethingMultilinear{ f } {\vec{x}^{(1)},\vec{x}^{(2)} }
	=\supnorm{c} H_c(\vecsuper{x}{1})\vecsuper{x}{2}.
	\]
	It follows that the definition above agrees with Definition~\ref{1.def:aux_ineq} from \cite{quadSystemsManyVars}. The case $d=3$ of Theorem~\ref{1.thm:manin} in~\cite{quadSystemsManyVars} therefore states that:

\begin{theorem}\label{2.thm:manin}
	Let the counting function $\numZeroesInBoxOf{c_1,\dotsc,c_R}(P)$ be as in Theorem~\ref{2.thm:birch's_result_short}. Suppose that for some $C_0 \geq 1$ and $\cancellation > 3R,$ we have
	\begin{equation}\label {2.eqn:aux_ineq_bound_in_manin_thm}
	\auxIneqOfSomethingNumSolns{\betaDotC}(B)
	\leq
	C_0 B^{2n-8\cancellation}
	\end{equation}
	for all $\vec{\beta}\in\bbR^R$ and $B\geq 1,$ where we write $\betaDotC$ for $ \beta_1c_1+\dotsb+\beta_Rc_R$. Then for some  $\singIntegralBox,$ $\singSeries\geq 0$ we have
	\begin{equation*}
	\numZeroesInBoxOf{c_1,\dotsc,c_R}(P)
	=
	\singIntegralBox\singSeries P^{n-dR}
	+
	O\brax{P^{n-dR-\delta}}
	\end{equation*}
	for all $P\geq 1,$ where the implicit constant depends at most on $C_0,$ $\cancellation$ and the $c_i,$ and the positive constant $\delta$ depends at most on $\cancellation,$ $d$ and $R$. The constants $\singIntegralBox$ and $\singSeries $ are positive under the same conditions as in Theorem~\ref{2.thm:birch's_result_short}.
\end{theorem}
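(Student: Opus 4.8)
The statement is designed to be nothing more than the case $d = 3$ of Theorem~\ref{1.thm:manin} of \cite{quadSystemsManyVars}, so the plan is to check that its hypotheses and conclusion translate and then quote that result; no new argument is needed. The conclusion is immediate on setting $d = 3$: the asymptotic formula there becomes $\numZeroesInBoxOf{c_1,\dotsc,c_R}(P) = \singIntegralBox\singSeries P^{n-3R} + O(P^{n-3R-\delta})$ with exactly the dependencies of the implied constant and of $\delta$ claimed above, and the positivity clause for $\singIntegralBox$ and $\singSeries$ carries over verbatim. What remains is to confirm that the counting function $\auxIneqOfSomethingNumSolns{\betaDotC}(B)$ of Definition~\ref{2.def:aux_ineq} really is the auxiliary count of \cite[Definition~\ref{1.def:aux_ineq}]{quadSystemsManyVars} in the case $d = 3$, applied to the single cubic form $\betaDotC$.

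For this I would unwind the definition of the symmetric multilinear system $\gradSomethingMultilinear{f}{\vecsuper{x}{1},\dotsc,\vecsuper{x}{d-1}}$ attached in \cite{quadSystemsManyVars} to a degree-$d$ form $f$, and evaluate it at $d = 3$, $f = c$. As recorded in the discussion preceding the statement this gives $\gradSomethingMultilinear{f}{\vecsuper{x}{1},\vecsuper{x}{2}} = \supnorm{c}\,H_c(\vecsuper{x}{1})\vecsuper{x}{2}$, where the scalar $\supnorm{c}$ is precisely the normalising factor built into $H_c$ in \eqref{2.eqn:def_of_H}. Dividing through by $\supnorm{c}$, the three constraints $\supnorm{\vec{x}},\supnorm{\vec{y}} \leq B$ and $\supnorm{H_c(\vec{x})\vec{y}} < B$ defining $\auxIneqOfSomethingNumSolns{c}(B)$ become literally the constraints defining the auxiliary count in \cite[Definition~\ref{1.def:aux_ineq}]{quadSystemsManyVars}; taking $c = \betaDotC$ identifies the two counting functions and the theorem follows. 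The only delicate point is bookkeeping rather than a genuine obstacle: one must check that the normalisation $\supnorm{\cdot}$ used in Definition~\ref{2.def:aux_ineq} agrees with the one in \cite{quadSystemsManyVars}, and that no combinatorial constant is dropped in passing between $\grad f$, its polarisation, and the Hessian matrix $H_c$.

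It is worth recording where the real work of the paper lies, since Theorem~\ref{2.thm:manin} is only a tool. To deduce Theorem~\ref{2.thm:main_thm_short} from it one must produce, \emph{uniformly in} $\vec{\beta}\in\bbR^R$, a bound $\auxIneqOfSomethingNumSolns{\betaDotC}(B) \leq C_0 B^{2n - 8\cancellation}$ with some $\cancellation > 3R$. One expects to be able to take $\cancellation$ essentially as large as $n/8$ --- i.e.\ an auxiliary bound of rough shape $B^{n + o(1)}$ --- so that the requirement $\cancellation > 3R$ becomes essentially $n > 24R$, which is what produces the hypothesis $n \geq 25R$ of Theorem~\ref{2.thm:main_thm_short}. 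Establishing such a bound on the number of integer pairs $(\vec{x},\vec{y})$ with $\supnorm{H_c(\vec{x})\vec{y}} < B$, uniformly in the cubic $c = \betaDotC$, by a Davenport-style Weyl-differencing argument combined with the geometry of numbers, is where the effort is concentrated; but it is not needed for the statement at hand.
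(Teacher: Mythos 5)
Your proposal matches the paper's own treatment: the paper likewise proves Theorem~\ref{2.thm:manin} only by checking that Definition~\ref{2.def:aux_ineq} agrees with \cite[Definition~\ref{1.def:aux_ineq}]{quadSystemsManyVars} via the identity $\gradSomethingMultilinear{f}{\vecsuper{x}{1},\vecsuper{x}{2}} = \supnorm{c}\,H_c(\vecsuper{x}{1})\vecsuper{x}{2}$ and then quoting the case $d=3$ of Theorem~\ref{1.thm:manin}. Your bookkeeping remarks (normalisation by $\supnorm{c}$, dependencies of the constants) are exactly the points the paper verifies, so the proposal is correct and essentially identical in approach.
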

	
	We give the following bound for the counting function $\auxIneqOfSomethingNumSolns{c} \brax{ B }$. The proof occupies the bulk of this paper and is completed in \S\ref{2.sec:cubic_aux_ineq_prop}. 
	
\begin{proposition}\label{2.prop:cubic_aux_ineq}
	We call a set $\calK$ of real cubic forms in $n$ variables a \emph{closed cone} if (i) for all $c\in \calK$ and $\lambda\geq 0$ we have $\lambda c\in K,$ and (ii) $\calK$ is closed in the real linear space of cubic forms in $n$ variables.
	
	Let $\calK$ be a closed cone as above, and let $\auxIneqOfSomethingNumSolns{c}(B)$ be as in Definition~\ref{2.def:aux_ineq}. If we set
	\begin{equation}\label{2.eqn:def_of_sigma-sub-calK}
	\sigma_\calK \eqdef 1+\max_{c\in\calK\setminus\set{ 0 }} \projdim \sing V(c),
	\end{equation}
	so that $\sigma_\calK\in\set{0,\dotsc,n-1},$ then for all $\epsilon>0,$ $c\in \calK$ and $B \geq 1$ we have
	\begin{equation}\label{2.eqn:aux_ineq_bound}
	\auxIneqOfSomethingNumSolns{c}\brax{ B }
	\ll_{\calK,\epsilon}
	B^{n+\sigma_\calK+\epsilon}.
	\end{equation}
\end{proposition}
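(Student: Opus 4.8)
The plan is to count the integer pairs $(\vec{x},\vec{y})$ with $\supnorm{\vec{x}},\supnorm{\vec{y}}\le B$ and $\supnorm{H_c(\vec{x})\vec{y}}<B$ by first fixing $\vec{x}$ and bounding the number of admissible $\vec{y}$, and then summing over $\vec{x}$ according to the rank of the matrix $H_c(\vec{x})$. For fixed $\vec{x}$ the condition $\supnorm{H_c(\vec{x})\vec{y}}<B$ together with $\supnorm{\vec{y}}\le B$ confines $\vec{y}$ to a symmetric convex body whose volume one estimates by a standard geometry-of-numbers / successive-minima argument: the number of lattice points is $\ll_\epsilon B^{n-r+\epsilon}\cdot(\text{something})$ where $r=\rank H_c(\vec{x})$, roughly because $\vec{y}$ is essentially free in the $(n-r)$-dimensional kernel and constrained to an interval of length $O(B/\mu)$ in each of the remaining $r$ directions, $\mu$ running over the nonzero singular values (or rather the invariant factors of the integer matrix $H_c(\vec{x})$). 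Here I would use $\supnorm{c}\ge 1/6$-type normalisation from Definition~\ref{2.def:aux_ineq} so that the entries of $\supnorm{c}H_c(\vec{x})$ are integers of size $\ll B$, making the divisor-type bounds legitimate; the upshot is $\auxIneqOfSomethingNumSolns{c}\brax{B}\ll_\epsilon B^{n+\epsilon}\sum_{\vec{x}}B^{-(r(\vec{x})-(\text{rank defect budget}))}$, which after being careful is dominated by $B^{n+\epsilon}\cdot\#\setbrax{\vec{x}\suchthat \rank H_c(\vec{x})\le k}\cdot B^{\,(\text{shift})}$ summed dyadically over $k$.

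The second and main ingredient is a bound for the number of integer $\vec{x}$ with $\supnorm{\vec{x}}\le B$ for which $H_c(\vec{x})$ has rank at most $k$. The locus $\setbrax{\vec{x}\suchthat \rank H_c(\vec{x})\le k}$ is the affine cone over a projective variety cut out by the $(k+1)\times(k+1)$ minors of $H_c(\vec{x})$; I would show its dimension is at most $k+\sigma_\calK$ (equivalently, that in $\bbP^{n-1}$ it has dimension at most $k-1+\sigma_\calK$). The geometric input is that when $H_c(\vec{x})$ drops rank to $k$ at a point, one expects each unit drop in rank below $n$ to cost one dimension, except that the drop may be forced along the singular locus of $V(c)$, whose projective dimension is at most $\sigma_\calK-1$ by the definition of $\sigma_\calK$; so $\dim\setbrax{\text{rank}\le k}\le (k+1)-1+ \sigma_\calK$ projectively, hence $\le k+\sigma_\calK$ affinely. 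This should follow by intersecting with generic linear subspaces and applying Bertini-type or incidence-variety dimension counts, uniformly over $c\in\calK$ by a compactness/Noetherianity argument on the closed cone $\calK$ (here is where the hypothesis that $\calK$ is a closed cone, rather than a single form, earns its keep: it lets one absorb the dependence on $c$ into the implied constant). Then $\#\setbrax{\vec{x}\in\bbZ^n\suchthat \supnorm{\vec{x}}\le B,\ \rank H_c(\vec{x})\le k}\ll_\calK B^{k+\sigma_\calK}$ by a trivial covering of a variety of that dimension by boxes, or by a Lang–Weil / dimension-growth estimate if one wants the bound to be genuinely uniform.

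Combining: summing the per-$\vec{x}$ count $B^{n-k+\epsilon}$ (valid when $\rank H_c(\vec{x})=k$, with the kernel contributing $B^{n-k}$ and the surviving directions contributing only $B^{\epsilon}$ worth of divisor factors after the lengths $B/\mu$ are summed against the rank-$k$ stratum) against the stratum bound $B^{k+\sigma_\calK}$ gives, for each $k$, a contribution $\ll_{\calK,\epsilon}B^{n-k+\epsilon}\cdot B^{k+\sigma_\calK}=B^{n+\sigma_\calK+\epsilon}$; there are only $n+1$ values of $k$, so the total is $\ll_{\calK,\epsilon}B^{n+\sigma_\calK+\epsilon}$, which is \eqref{2.eqn:aux_ineq_bound}. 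I expect the dimension bound on the rank-drop locus — in particular establishing it \emph{uniformly} for all $c$ in the cone $\calK$, and correctly accounting for the contribution of $\sing V(c)$ — to be the crux; the geometry-of-numbers half is routine but must be organised so that the divisor-type losses are genuinely only $B^\epsilon$ and do not accumulate a power of $B$ when summed over the stratum.
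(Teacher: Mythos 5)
Your proposal has two genuine gaps, and they are precisely the points where this problem differs from the classical Davenport--Schmidt setting. First, the stratification of $\vec{x}$ by $\rank H_c(\vec{x})$ cannot work here. The proposition must hold for every $c$ in the closed cone $\calK$ with uniform constants, and in the application $\calK=\set{\betaDotC\suchthat\vec{\beta}\in\bbR^R}$, so $H_c(\vec{x})$ is a \emph{real} matrix: there are no integer entries, no invariant factors, and no divisor-type bounds to invoke. Moreover the constraint is the inequality $\supnorm{H_c(\vec{x})\vec{y}}<B$ rather than the equation $H_c(\vec{x})\vec{y}=\vec{0}$, so even a full-rank matrix whose nonzero singular values are tiny admits on the order of $B^n$ admissible $\vec{y}$; the per-$\vec{x}$ count is governed by the actual sizes of the singular values (as in Lemma~\ref{2.lem:from_solutions_to_minors}, which gives $\ll B^n/(1+\abs{\lambda_1\dotsm\lambda_i})$), not by the rank. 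This is why the paper stratifies $\vec{x}$ by dyadic ranges of the eigenvalues of $H_c(\vec{x})$ (Definition~\ref{2.def:the_sets_K-sub-k}, Corollary~\ref{2.cor:many_x_with_prescribed_eigenvalues}) and must then bound the number of integer points in the metric sets $K_k(E_1,\dotsc,E_{k+1})$, where eigenvalues are merely small rather than zero; a dimension bound for the algebraic rank-drop locus does not control these sets.

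Second, the bound you assert for the rank-drop locus, $\dim\set{\vec{x}\suchthat\rank H_c(\vec{x})\leq k}\leq k+\sigma_\calK$, is the actual crux, and the justification offered (``each unit drop in rank costs one dimension, except along the singular locus'', Bertini-type counts) is a heuristic, not an argument: determinantal loci of Hessians of special, non-generic cubics do not obey expected-codimension counts, and relating their dimension to $\projdim\sing V(c)$ is exactly what requires the Davenport--Schmidt construction --- the passage from a point where the minors $\cubicMinorsAt{b+1}{\vec{x}}$ have a degenerate Jacobian to spaces $X,Y$ with $\vec{Y}^T H_c(\vec{X})\vec{Y}'$ small, and thence to singular points of $V(c)$ (steps \ref{2.itm:count_x_assuming_Delta^k=0_nonsingular}--\ref{2.itm:contructing_solutions_with_spaces_X_and_Y} of \S\ref{2.sec:outline}, carried out quantitatively in Lemma~\ref{2.lem:union_of_cubes}, Lemma~\ref{2.lem:davenport's_trick} and \S\ref{2.sec:cubic_aux_ineq_prop}). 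Because of the first point this input must moreover be quantitative: the paper covers $K_k(E_1,\dotsc,E_{k+1})$ by boxes using a Taylor expansion of the minors and a lower bound on $\cubicMinorsJacobian{k+1}$ along suitable directions, and the uniformity over $c\in\calK$ is not a Noetherianity remark but a genuine compactness/limiting argument: if the bound failed for forms $c_N\in\calK$, normalising $\supnorm{c_N}=1$ and passing to a convergent subsequence turns the $N^{-1}$-smallness of the trilinear form into exact vanishing for a limit form $c\in\calK$, producing $\projdim\sing V(c)\geq\sigma_\calK$ and a contradiction with \eqref{2.eqn:def_of_sigma-sub-calK}. Without supplying both the eigenvalue-based (rather than rank-based) stratification and a proof of the geometric input in this quantitative, uniform form, the proposal does not establish \eqref{2.eqn:aux_ineq_bound}.
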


We will outline the proof after deducing Theorem~\ref{2.thm:main_thm_short}.

\begin{proof}[Proof of Theorem~\ref{2.thm:main_thm_short}]
	Suppose that \eqref{2.eqn:condition_on_n_short} holds. We claim that for all $B\geq 1,$ $\epsilon>0$ and $\vec{\beta}\in\bbR^R$ we have
	\begin{equation}\label{2.eqn:aux_ineq_bound_main_thm_proof}
		\auxIneqOfSomethingNumSolns{\betaDotC}(B)
		\ll_{c_1,\dots,c_R,\epsilon}
		B^{n+R-1+\epsilon}
	\end{equation}

where $\betaDotC$ is as in Theorem~\ref{2.thm:manin}. If we set $\cancellation = (n-R+\frac{1}{2})/8$ and let $C_0 $ be sufficiently large in terms of the forms $c_i,$ we can then apply Theorem~\ref{2.thm:manin}. For \eqref{2.eqn:aux_ineq_bound_main_thm_proof} implies \eqref{2.eqn:aux_ineq_bound_in_manin_thm}  on setting  $\epsilon=\frac{1}{2}$ in \eqref{2.eqn:aux_ineq_bound_main_thm_proof}. Moreover we have $\cancellation>3R,$ by \eqref{2.eqn:condition_on_n_short}. So the hypotheses of Theorem~\ref{2.thm:manin} are satisfied, and Theorem~\ref{2.thm:main_thm_short} follows.

Setting $\calK= \set{\betaDotC\suchthat\vec{\beta}\in\bbR^R}$ in Proposition~\ref{2.prop:cubic_aux_ineq}, we see that \eqref{2.eqn:aux_ineq_bound_main_thm_proof} follows from \eqref{2.eqn:aux_ineq_bound} unless
$
\sigma_\calK
>
R-1
$ holds. Suppose for a contradiction that we have 
$
\sigma_\calK
>
R-1
$.

By the definition \eqref{2.eqn:def_of_sigma-sub-calK} there must be $\vec{\beta}\in\bbR^R\setminus\set{\vec{0}}$ with
\begin{equation}\label{2.eqn:very_singular_hypersurface}
\projdim\sing V(\betaDotC)
\geq
R-1.
\end{equation}
We may assume that $
V(c_1,\dots,c_R)
=
V(c_1,\dotsc,c_{R-1},\betaDotC)$ holds, after permuting the $c_i$ if necessary. Since $V(c_1,\dots,c_R)$ is a smooth complete intersection, we then have
\[
V(c_1,\dotsc,c_{R-1})\cap
\sing V(\betaDotC)
\subset
 \sing V(c_1,\dots,c_R),
\]
and so $\projdim \sing V(c_1,\dots,c_R) > \projdim \sing V(\betaDotC) -R$ holds. Thus  \eqref{2.eqn:very_singular_hypersurface} implies that $V(c_1,\dots,c_R)$ is singular, which is false by assumption.
\end{proof}

\subsection{Outline of remaining steps}\label{2.sec:outline}

To prove Proposition~\ref{2.prop:cubic_aux_ineq} we adapt the argument used to prove Lemma~3 in Davenport~\cite{davenportSixteen}, and subsequently a somewhat more general result in \S5 of Schmidt~\cite{schmidtSystemsCubics}. These authors consider the counting function defined by
\begin{equation*}
N^{\operatorname{aux-eq}}_c(B)
\eqdef
\#\set{(\vec{x},\vec{y})\in(\bbZ^n)^2 \suchthat \supnorm{ \vec{x} },\supnorm{ \vec{y} }\leq B,\,
	\cubicMatrix\vec{y}
	= \vec{0}}
\end{equation*}
for some cubic form $c$ with integer coefficients. Davenport proves that either $N^{\operatorname{aux-eq}}_c(B)$ is small, or there is a large rational linear space on which $c$ vanishes. In order to briefly sketch his line of reasoning, we define some additional notation.

\begin{definition}\label{2.def:cubic_eigenvalues}
	Let $\supnorm{ \cubicMatrix }= \max_{i,j} \abs{\cubicMatrix_{ij}} $ and let $ {\cubicEigenvalue{ 1 }} , \dotsc, {\cubicEigenvalue{ n }}$ be the  eigenvalues of the real symmetric matrix $\cubicMatrix,$ listed with multiplicity and in order of decreasing absolute value. Observe that
	\begin{equation}\label{2.eqn:lambda-sub-1_<<_B}
	\abs{\cubicEigenvalue{1}}
	\leq n \supnorm{ \cubicMatrix }
	\leq n^2 \supnorm{\vec{x}}.
	\end{equation}
	For each $i \in \set{ 1,\dotsc,n }$ let $\cubicMinors{ i }$ be the vector of all $i\times i$ minors of $\cubicMatrix,$ arranged in some order. This is a vector of degree $i$ homogeneous forms in the variables $\vec{x},$ with real coefficients. Let $\cubicMinorsJacobian{ i }$  be the Jacobian matrix $\brax{ \partial \Delta^{(c,i)}_j(\vec{x})/\partial x_k }_{jk}$.
\end{definition}

Davenport's argument runs as follows.

\begin{enumerate}[(1)]
	\item\label{2.itm:many_x_with_rank_H_small}
	Let $\sigma\in\set{0,\dotsc,n-1}$. Suppose that  $N^{\operatorname{aux-eq}}_c(B) \gg B^{n+\sigma}$ for some sufficiently large implicit constant. The contribution to this count from any one vector $\vec{x}$ is at most $O(B^{n-\rank \cubicMatrix})$. So there must be an integer $b $ in the set $\set{0,\dotsc,n-1}$ such that at least $\gg B^{\sigma+b}$ integer points $\vec{x}$ satisfy both $\rank\cubicMatrix = b$ and $\supnorm{ \vec{x} }\leq B$.
	
	\item\label{2.itm:count_x_assuming_Delta^k=0_nonsingular}
	If $\sigma, b$ are as in~\ref{2.itm:many_x_with_rank_H_small}, then it follows that there is an integer point $\vecsuper{x}{0}$ such that $\rank\cubicMatrixAt{\vecsuper{x}{0}} = b$ holds and the tangent space to the affine variety $\cubicMinorsAt{ b+1 }{\vec{x}} = \vec{0}$ at the point $\vecsuper{x}{0}$ has dimension $\sigma+b+1$ or more. Equivalently, $\rank\cubicMatrixAt{\vecsuper{x}{0}} = b$ and $\rank \cubicMinorsJacobianAt{ b+1 }{\vecsuper{x}{0}} \leq n-\sigma-b-1$ both hold. This follows from Lemma 2 of Davenport~\cite{davenportSixteen}.
	
	\item\label{2.itm:finding_spaces_X_and_Y}
	If there exists a vector $\vecsuper{x}{0}$ as in~\ref{2.itm:count_x_assuming_Delta^k=0_nonsingular}, then it follows that there exist linear subspaces $X, Y$ of $ \bbQ^n,$ with dimensions $\sigma+b+1$ and $n-b$ respectively, such that for all $\vec{X}\in X$ and $\vec{Y},\vec{Y}' \in Y$ the equality $\vec{Y}^T H_c(\vec{X}) \vec{Y}' = 0$ holds. See Lemma~4 in Schmidt~\cite{schmidtSystemsCubics} or the proof of Lemma~3 in Davenport~\cite{davenportSixteen}.
	
	\item\label{2.itm:contructing_solutions_with_spaces_X_and_Y}
	We conclude that if $N^{\operatorname{aux-eq}}_c(B) \gg B^{n+\sigma}$ then there are spaces $X,Y$ as in~\ref{2.itm:finding_spaces_X_and_Y}. So the space $Z$ defined by $Z= X\cap Y $ is a rational linear space, with dimension at least $\sigma+1,$ such that for all $\vec{Z}\in Z$ the equality $c(\vec{Z}) =0 $ holds.
\end{enumerate}

Our setting differs in three ways from that of Schmidt and Davenport. First, we consider the inequality $\supnorm{ \cubicMatrix\vec{y}} \leq B$ rather than the equation $\cubicMatrix\vec{y} = \vec{0}$. Second, for us the cubic form $c(\vec{x})$ may have real coefficients. And third, rather than concluding that $c(\vec{x})$ has a rational linear space of zeroes, we seek to show that the variety $V(c)$ is very singular.

\subsection{Structure of this paper}\label{2.sec:structure}

In \S\ref{2.sec:many_x_with_rank_H_small} and the three sections \S\S\ref{2.sec:putting_conditions_on_the_Jacobian}-\ref{2.sec:cubic_aux_ineq_prop} we will modify each of the four steps~\ref{2.itm:many_x_with_rank_H_small}-\ref{2.itm:contructing_solutions_with_spaces_X_and_Y} above to accommodate the three changes described at the end of \S\ref{2.sec:outline}. In the remaining section, \S\ref{2.sec:minors_and_eigenvalues}, we prove some technical lemmas relating the minors and eigenvalues of real matrices.

	\subsection{Notation}\label{2.sec:notation}

Throughout, we let  $c,$ $\supnorm{\vec{t}},$ $\supnorm{c},$ $\cubicMatrix$ and $\auxIneqOfSomethingNumSolns{c} \brax{ B }$ be as in Definition~\ref{2.def:aux_ineq}, and we let $\supnorm{\cubicMatrix},$ $\cubicEigenvalue{i},$ $\cubicMinors{i}$ and $\cubicMinorsJacobian{i}$ be as in Definition~\ref{2.def:cubic_eigenvalues}.  We do not require algebraic varieties to be irreducible, and we adopt the convention that $\projdim \emptyset = -1$.  We use Vinogradov's $\ll$ notation and big-$O$ notation in the usual way.

	\section{The eigenvalues of the Hessian matrix $\cubicMatrix$}\label{2.sec:many_x_with_rank_H_small}

We show that if the counting function $\auxIneqOfCNumSolns\brax{ B }$ from Definition~\ref{2.def:aux_ineq} is large, then there are many integer points $\vec{x}$ for which the eigenvalues of $\cubicMatrix$ lie in some fixed dyadic ranges. This corresponds to step~\ref{2.itm:many_x_with_rank_H_small} from \S\ref{2.sec:outline}.

\begin{lemma}\label{2.lem:from_solutions_to_minors}
	Let $H$ be a real symmetric $n\times n$ matrix and let ${\lambda_1} , \dotsc , {\lambda_n}$ be the eigenvalues of the matrix $H,$ listed with multiplicity and in order of decreasing absolute value. Let $C\geq 1$ and $B \geq 1,$ and suppose that $\abs{\lambda_1}\leq C B$ holds. Set
	\begin{equation*}
	N_H(B)
	\eqdef
	\#\set{ \vec{y}\in\bbZ^n \suchthat
		\supnorm{\vec{y}}\leq B,\,
		\supnorm{ H\vec{y} } \leq B }.
	\end{equation*}
	Then we have
	\begin{equation*}
	N_H(B)
	\ll_{C,n}
	\min_{1 \leq i\leq n }
	\frac{B^n}{ 1+ \abs{\lambda_1\dotsm \lambda_i} }.
	\end{equation*}
\end{lemma}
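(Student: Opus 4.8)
The plan is to diagonalise $H$ by an orthogonal change of coordinates and count lattice points in the resulting skew box. Write $H = Q^T \operatorname{diag}(\lambda_1,\dotsc,\lambda_n) Q$ with $Q$ orthogonal, and set $\vec{z} = Q\vec{y}$. The constraint $\supnorm{\vec{y}} \leq B$ gives $\supnorm{\vec{z}} \leq \sqrt{n}\, B$ (since $\normDoubleBar{\vec{z}}_2 = \normDoubleBar{\vec{y}}_2 \leq \sqrt{n}\,\supnorm{\vec{y}}$ and likewise $\supnorm{\vec z}\le\normDoubleBar{\vec z}_2$), while $\supnorm{H\vec{y}} \leq B$ gives $\supnorm{\operatorname{diag}(\lambda_j)\,\vec z} \le \sqrt n\, B$, i.e. $|\lambda_j z_j| \leq \sqrt{n}\, B$ for each $j$. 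The point $\vec{y}$ ranges over $\bbZ^n$, which under $Q$ becomes a unimodular lattice $\Lambda = Q\bbZ^n$ in $\bbR^n$. So $N_H(B)$ is at most the number of points of $\Lambda$ in the axis-parallel box $\mathscr{R} = \prod_{j=1}^n [-r_j, r_j]$ where $r_j = \sqrt{n}\,B \cdot \min\{1, |\lambda_j|^{-1}\}$ (and $r_j = \sqrt n\,B$ when $\lambda_j = 0$).

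Next I would bound the number of lattice points of a unimodular lattice in such a box. The standard estimate is that the number of points of a lattice $\Lambda$ of covolume $1$ in a box $\mathscr R$ is $\ll_n \prod_{j}(1 + r_j)$ — one can see this by covering $\mathscr R$ with $\ll_n \prod_j(1+r_j)$ unit cubes and noting each unit cube contains $O_n(1)$ points of a unimodular lattice, or more cleanly by a volume comparison after thickening. Applying this with the $r_j$ above gives
\[
N_H(B)
\ll_n
\prod_{j=1}^n \Bigl(1 + \sqrt{n}\,B\min\{1,|\lambda_j|^{-1}\}\Bigr).
\]
Now fix $i \in \{1,\dotsc,n\}$. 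For the $i$ largest eigenvalues (indices $1,\dotsc,i$, where $|\lambda_1| \geq \dotsb \geq |\lambda_i|$) I bound $1 + \sqrt n\, B\min\{1,|\lambda_j|^{-1}\}$ by $\ll_n B/\max\{1,|\lambda_j|\} \cdot (\text{something}) $ — more precisely, since $|\lambda_1| \le CB$ we have $|\lambda_j| \le CB$ for all $j$, so $\sqrt n\, B/\max\{1,|\lambda_j|\} \ge \sqrt n/C \gg 1$ and hence $1 + \sqrt n\,B\min\{1,|\lambda_j|^{-1}\} \ll_{C,n} B/\max\{1,|\lambda_j|\} \le B/|\lambda_j|$ when $|\lambda_j|\ge 1$, while the factor is trivially $\ll_{C,n} B$ when $|\lambda_j| < 1$; in either case it is $\ll_{C,n} B/\max\{1,|\lambda_j|\}$, and in particular $\ll_{C,n} B \cdot \min\{1,|\lambda_j|^{-1}\} \cdot \max\{1,|\lambda_j|\}^2/\dots$ — cleanly: each of the first $i$ factors is $\ll_{C,n} B\,\min\{1,|\lambda_j|^{-1}\}$ times a bounded quantity only when $|\lambda_j|\ge 1$; to cover $|\lambda_j|<1$ too, note $\min\{1,|\lambda_j|^{-1}\} = 1$ there, so the factor is $\ll_{C,n} B = B\min\{1,|\lambda_j|^{-1}\}$. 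Thus the product of the first $i$ factors is $\ll_{C,n} B^i / \prod_{j\le i}\max\{1,|\lambda_j|\} \le B^i(1 + |\lambda_1 \dotsm \lambda_i|)^{-1} \cdot c_n$, using that $\prod_{j\le i}\max\{1,|\lambda_j|\} \gg 1 + |\lambda_1\dotsm\lambda_i|$. Each of the remaining $n-i$ factors is at most $1 + \sqrt n\, B \ll_n B$. Multiplying gives $N_H(B) \ll_{C,n} B^n/(1 + |\lambda_1\dotsm\lambda_i|)$, and taking the minimum over $i$ finishes the proof.

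The main obstacle is handling small eigenvalues and the $1+\,\cdot\,$ truncations carefully: a naive bound $\prod_j B/|\lambda_j|$ is false when some $|\lambda_j|$ is tiny or zero, which is exactly why the $\min_{1\le i\le n}$ appears in the statement — one only divides by the $i$ largest eigenvalues and absorbs the rest into $B^{n-i}$. The hypothesis $|\lambda_1| \le CB$ is what lets every individual box side $r_j$ be $\gg_{C,n} 1$, so that $1 + r_j \ll_{C,n} r_j$ and the $+1$'s cost only a constant; without it a single huge eigenvalue could make the corresponding side shorter than $1$ and the lattice-point count would not drop below $1$, which is consistent but would need the $\min$ to stop before that index anyway. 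So the real content is just organising the eigenvalues by size and choosing which ones to "spend" on shrinking the box versus which to leave at full width $\sqrt n\,B$.
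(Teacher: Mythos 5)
Your proof is correct and is essentially the paper's argument in different clothing: the paper confines the solutions to the box $\supnorm{\vec{y}}\le B$ intersected with the ellipsoid of principal radii $\abs{\lambda_j}^{-1}\sqrt{n}B$, bounds the count by $\prod_j\min\brax{1+\abs{\lambda_j}^{-1}\sqrt{n}B,\,B}$, uses $\abs{\lambda_1}\le CB$ to absorb the $+1$'s, and then exploits the decreasing order of the eigenvalues exactly as you do. One wording caveat: your lattice-point bound $\ll_n\prod_j(1+r_j)$ needs that $\Lambda=Q\bbZ^n$ is a rotated copy of $\bbZ^n$ and hence has minimal distance $1$ (unimodularity alone is not enough, since a unimodular lattice with a short vector can place many points in a single unit cube), but as your $\Lambda$ does have this separation property the thickening/packing argument you indicate goes through.
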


\begin{proof}
	The integral vectors $\vec{y}$ counted by $N_H(B)$ are all contained in the box $\supnorm{\vec{y}} \leq B,$ and in the ellipsoid
	\begin{equation*}
	\set{\vec{t}\in\bbR^n \suchthat \vec{t}^T { H^T H } \vec{t}
		\leq
		n B^2},
	\end{equation*}
	which has principal radii $\abs{\lambda_i}^{-1}\sqrt{n}B$. Hence
	\begin{align*}
	N_H(B)
	&\ll_n
	\prod_{i=1}^n \min\setbrax{ 1+\abs{\lambda_i}^{-1}\sqrt{n}B,\, B}
	\intertext{and as $\abs{\lambda_i}\leq CB$ holds, this is}
	&\leq
	\prod_{i=1}^n \min\setbrax{ 2C\abs{\lambda_i}^{-1}\sqrt{n}B,\, B}.
	\end{align*}
	It follows that
	\begin{equation*}
	N_H(B)
	\ll_{C,n}
	B^n \prod_{i=1}^n \min\setbrax{ \abs{\lambda_i}^{-1}, \, 1}.
	\end{equation*}
	Since the inequalities $\abs{\lambda_1}\geq \dotsb\geq \abs{\lambda_n}$ hold, we deduce that
	\begin{align*}
	N_H(B)
	&\ll_{C,n}
	B^n
	\min\setbrax[\bigg]{ 1, \, \frac{1}{\abs{\lambda_1}}
		,\, \frac{1}{\abs{\lambda_1\lambda_2}}
		,\,\dotsc
		,\, \frac{1}{\abs{\lambda_1\dotsm\lambda_n}} }
	\\
	&\ll
	\min_{1 \leq i\leq n }
	\frac{B^n}{ 1+ \abs{\lambda_1\dotsm \lambda_i} }.\qedhere
	\end{align*}
\end{proof}

\begin{definition}\label{2.def:the_sets_K-sub-k}
Suppose that  $k\in \set{0,\dotsc,n}$ and that $E_1,\dotsc,E_{k+1}\in\bbR$ such that the inequalities $E_1\geq \dotsb \geq E_{k+1} \geq 1$ hold. Then we define $
K_k(E_1,\dotsc,E_{k+1})$ to be the set of all vectors $\vec{x}$ in $\bbR^n$ satisfying the following conditions: the inequality $\supnorm{\vec{x}}\leq B$ holds, and we have
\[
\frac{1}{2} E_i
<
\abs{ \cubicEigenvalue{ i } }
\leq E_i
\]
whenever $1 \leq i \leq k$ holds, and we have
\[
\abs{ \cubicEigenvalue{ i } }
\leq
E_{k+1}
\]
whenever $k+1 \leq i \leq n$ holds.
\end{definition}

\begin{corollary}\label{2.cor:many_x_with_prescribed_eigenvalues} Let $\auxIneqOfCNumSolns\brax{ B }$  be as in Definition \ref{2.def:aux_ineq}, let $\cubicEigenvalue{i}$ and $\cubicMinors{ i }$ be as in Definition~\ref{2.def:cubic_eigenvalues}, and let $K_k(E_1,\dotsc,E_{k+1})$ be as in Definition~\ref{2.def:the_sets_K-sub-k}. For any $B\geq 1,$ one of the following alternatives holds. Either
	\begin{equation}\label{2.eqn:many_x_with_H_small}
	\frac{\auxIneqOfCNumSolns\brax{B}}{B^n \brax{\log B}^n} \ll_n \#\setbrax{\bbZ^n \cap K_0(1)},
	\end{equation}
	or there is $k\in \set{1,\dotsc,n-1}$ and there are $e_1, \dotsc, e_k\in\bbN$ such that the inequalities $\log B \gg_n e_1 \geq \dotsb \geq e_k $ hold and
	\begin{equation}\label{2.eqn:many_x_with_prescribed_eigenvalues}
	\frac{ 2^{e_1+\dotsb+e_k} \auxIneqOfCNumSolns\brax{B}}{B^n \brax{\log B}^n}
	\ll_n
	\#\setbrax[\big]{ \bbZ^n \cap K_k\brax{2^{e_1}, \dotsc, 2^{e_k}, 1} },
	\end{equation}
	or there are $e_1, \dotsc, e_n\in\bbN$ satisfying $\log B \gg_n e_1 \geq \dotsb \geq e_n$  and
	\begin{equation}\label{2.eqn:many_x_with_all_eigenvalues_prescribed}
	\frac{ 2^{e_1+\dotsb+e_n} \auxIneqOfCNumSolns\brax{B}}{B^n \brax{\log B}^n}
	\ll_n
	\#\setbrax[\big]{ \bbZ^n \cap K_{n-1}\brax{2^{e_1}, \dotsc, 2^{e_n}} }.
	\end{equation}
\end{corollary}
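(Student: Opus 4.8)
The plan is to expand $\auxIneqOfCNumSolns\brax{B}$ as a double sum over the admissible pairs $(\vec{x},\vec{y})$, bound the inner sum over $\vec{y}$ by Lemma~\ref{2.lem:from_solutions_to_minors}, then sort the outer sum over $\vec{x}$ into $\ll_n(\log B)^n$ dyadic classes according to the sizes of the eigenvalues of $\cubicMatrix$ and pick out a dominant class by pigeonhole; each possible type of dominant class then yields one of the three alternatives. (We may assume $B$ larger than any prescribed constant depending on $n$.) For each $\vec{x}\in\bbZ^n$ with $\supnorm{\vec{x}}\leq B$ the number of $\vec{y}$ contributing to $\auxIneqOfCNumSolns\brax{B}$ is at most $N_{\cubicMatrix}\brax{B}$ in the notation of Lemma~\ref{2.lem:from_solutions_to_minors} (the difference between $\supnorm{\cubicMatrix\vec{y}}<B$ and $\leq B$ being immaterial), so $\auxIneqOfCNumSolns\brax{B}\leq\sum_{\vec{x}}N_{\cubicMatrix}\brax{B}$, the sum running over $\vec{x}\in\bbZ^n$ with $\supnorm{\vec{x}}\leq B$. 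By \eqref{2.eqn:lambda-sub-1_<<_B} we have $\abs{\cubicEigenvalue{1}}\leq n^2\supnorm{\vec{x}}\leq n^2 B$ for every such $\vec{x}$, so Lemma~\ref{2.lem:from_solutions_to_minors} applies with $C=n^2$ and gives
\[
N_{\cubicMatrix}\brax{B}\ll_n\min_{1\leq i\leq n}\frac{B^n}{1+\abs{\cubicEigenvalue{1}\dotsm\cubicEigenvalue{i}}},
\]
while also $N_{\cubicMatrix}\brax{B}\ll_n B^n$ trivially.

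Next I would partition the $\vec{x}$ with $\supnorm{\vec{x}}\leq B$ according to the number $j=j\brax{\vec{x}}\in\set{0,\dotsc,n}$ of indices $i$ for which $\abs{\cubicEigenvalue{i}}>1$; since the eigenvalues are listed in order of decreasing absolute value these are precisely $i=1,\dotsc,j$. For $1\leq i\leq j$ set $e_i=\lceil\log_2\abs{\cubicEigenvalue{i}}\rceil\geq 1$, so that $\tfrac12 2^{e_i}<\abs{\cubicEigenvalue{i}}\leq 2^{e_i}$; by \eqref{2.eqn:lambda-sub-1_<<_B} one has $e_1\leq\log_2\brax{n^2 B}+1\ll_n\log B$, and $e_1\geq\dotsb\geq e_j$. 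Hence the $\vec{x}$ with $j\brax{\vec{x}}\geq 1$ fall into $\ll_n(\log B)^n$ classes indexed by $\brax{j;e_1,\dotsc,e_j}$, and every $\vec{x}$ in such a class lies in $K_j\brax{2^{e_1},\dotsc,2^{e_j},1}$ when $j\leq n-1$ (using $\abs{\cubicEigenvalue{i}}\leq 1$ for $i>j$) and in $K_{n-1}\brax{2^{e_1},\dotsc,2^{e_n}}$ when $j=n$; the admissibility requirement $2^{e_1}\geq\dotsb\geq 2^{e_j}\geq 1$ of Definition~\ref{2.def:the_sets_K-sub-k} holds because $e_1\geq\dotsb\geq e_j\geq 1$. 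The $\vec{x}$ with $j\brax{\vec{x}}=0$ form one further class, contained in $K_0\brax{1}$. As there are $\ll_n(\log B)^n$ classes in all, one of them --- the \emph{dominant} class --- contributes $\gg_n\auxIneqOfCNumSolns\brax{B}/(\log B)^n$ to the sum.

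It remains to bound the contribution of the dominant class from above. If it is the class $j=0$, then the trivial bound $N_{\cubicMatrix}\brax{B}\ll_n B^n$ on each of its members, compared with the lower bound just obtained, yields \eqref{2.eqn:many_x_with_H_small}. If it is a class with $1\leq j=k\leq n-1$, then for each member, choosing $i=k$ in the minimum of Lemma~\ref{2.lem:from_solutions_to_minors} and using $\abs{\cubicEigenvalue{i}}>2^{e_i-1}$ for $i\leq k$ gives $N_{\cubicMatrix}\brax{B}\ll_n B^n/\brax{1+2^{e_1+\dotsb+e_k-k}}\ll_n B^n 2^{-(e_1+\dotsb+e_k)}$; summing over the at most $\#\brax{\bbZ^n\cap K_k\brax{2^{e_1},\dotsc,2^{e_k},1}}$ members and comparing with the lower bound gives \eqref{2.eqn:many_x_with_prescribed_eigenvalues}. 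If the dominant class is the class $j=n$, the same computation with $i=n$ gives $N_{\cubicMatrix}\brax{B}\ll_n B^n 2^{-(e_1+\dotsb+e_n)}$ on each member, and summing over the at most $\#\brax{\bbZ^n\cap K_{n-1}\brax{2^{e_1},\dotsc,2^{e_n}}}$ members gives \eqref{2.eqn:many_x_with_all_eigenvalues_prescribed}. I do not expect a genuine obstacle: the one substantive input, Lemma~\ref{2.lem:from_solutions_to_minors}, is already in hand, and the care needed is purely in the bookkeeping --- ensuring each $\vec{x}$ lands in exactly one of the claimed sets $K_j$, taking $i=j$ in the minimum, and observing that the factor $2^j\leq 2^n$ lost in replacing $\prod_{i\leq j}\abs{\cubicEigenvalue{i}}$ by the smaller quantity $\prod_{i\leq j}2^{e_i-1}$ is absorbed harmlessly into the implied constant.
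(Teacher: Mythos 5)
Your argument is correct and is essentially the paper's own proof: the dyadic classes indexed by $(j;e_1,\dotsc,e_j)$ are exactly the partition of the box into $K_0(1)$ and the sets $K_k(2^{e_1},\dotsc,2^{e_k},1)$ used in the paper, followed by the same pigeonhole over $O_n((\log B)^n)$ classes, the same application of Lemma~\ref{2.lem:from_solutions_to_minors} with $C=O_n(1)$, and the same reduction of the $j=n$ case via the inclusion into $K_{n-1}(2^{e_1},\dotsc,2^{e_n})$. No substantive differences.
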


\begin{proof}
	Note that in the case that $k=n,$ there are no values of $i$ satisfying $k+1 \leq i \leq n,$ so the  last condition in the definition of $K_k\brax{E_1,\dotsc,E_{k+1}}$ is vacuously true and can be omitted. In particular, if $k=n$ then \eqref{2.eqn:many_x_with_all_eigenvalues_prescribed} follows from \eqref{2.eqn:many_x_with_prescribed_eigenvalues}, because
	\[
	K_n\brax{2^{e_1},\dotsc,2^{e_n},1} \subset K_{n-1}\brax{2^{e_1},\dotsc,2^{e_n}}.
	\]
	So it is enough to prove that either \eqref{2.eqn:many_x_with_H_small} holds or there exist integers $k$ and $e_1,\dotsc,e_k,$ satisfying the inequalities $1\leq k \leq n$ and  $\log B \gg_n e_1 \geq \dotsb \geq e_n,$ such that \eqref{2.eqn:many_x_with_prescribed_eigenvalues} holds.
	
	Now the set $K_0(1),$ together with the sets $K_k\brax{2^{e_1},\dotsc,2^{e_k},1},$ partition the box $\supnorm{\vec{x}}\leq B$ into disjoint pieces. So, if we let
		\begin{equation*}
	N_{\cubicMatrix}(B)
	\eqdef
	\#\set{ \vec{y}\in\bbZ^n \suchthat
		\supnorm{\vec{y}}\leq B,\,
		\supnorm{ N_{\cubicMatrix}(B)\vec{y} } \leq B },
	\end{equation*}
	then we have
	\begin{equation}\label{2.eqn:partition_of_aux_ineq_solns}
	\auxIneqOfCNumSolns\brax{B}
	=
	\sum_{\substack{ \vec{x}\in\bbZ^n \\ \vec{x}\in K_0(1) }} N_{\cubicMatrix}(B)
	+
	\sum_{\substack{ 1 \leq k \leq n \\ e_1 \geq \dotsb \geq e_k \geq 1 \\ e_1 \ll_n \log B }}
	\sum_{\substack{ \vec{x}\in\bbZ^n \\ \vec{x}\in K_k\brax{2^{e_1},\dotsc,2^{e_k},1} }}
	N_{\cubicMatrix} (B).
	\end{equation}
	The total number of terms on the right-hand side of \eqref{2.eqn:partition_of_aux_ineq_solns} is $O_n(\brax{\log B}^n)$ at most, so it follows that either
	\begin{equation}\label{2.eqn:triv_partition_large}
	\sum_{\substack{ \vec{x}\in\bbZ^n \\ \vec{x}\in K_0(1) }}
	N_{\cubicMatrix} (B)
	\gg_n
	\frac{\auxIneqOfCNumSolns\brax{B}}{\brax{\log B}^n}
	\end{equation}
	holds, or else there are $1 \leq k \leq n$ and $e_1 \geq \dotsb \geq e_k \geq 1 $ such that
	\begin{equation}\label{2.eqn:one_partition_large}
	\sum_{\substack{ \vec{x}\in\bbZ^n \\ \vec{x}\in K_k\brax{2^{e_1},\dotsc,2^{e_k},1} }}
	N_{\cubicMatrix} (B)
	\gg_n
	\frac{\auxIneqOfCNumSolns\brax{B}}{\brax{\log B}^n}.
	\end{equation}
	If \eqref{2.eqn:triv_partition_large} holds then the trivial bound $N_{\cubicMatrix} (B) \ll_n B^n$ implies \eqref{2.eqn:many_x_with_H_small}. Suppose instead that \eqref{2.eqn:one_partition_large} holds.
	
	By \eqref{2.eqn:lambda-sub-1_<<_B}, for each real vector $\vec{x}$ the bound $\abs{\cubicEigenvalue{1}} \ll_n B$ holds. So we may apply Lemma~\ref{2.lem:from_solutions_to_minors} with the choice $H = \cubicMatrix$ and some $C$ depending on $n$ only. This shows that
	\begin{equation*}
	N_{\cubicMatrix}(B)
	\ll_{n}
	\frac{B^n}{2^{e_1+\dotsb+e_k}}.
	\end{equation*}
	Substituting this into  \eqref{2.eqn:one_partition_large} we see that \eqref{2.eqn:many_x_with_prescribed_eigenvalues} holds, as claimed.
\end{proof}

%
%

\section{Intermission: Eigenvalues and minors}\label{2.sec:minors_and_eigenvalues}

Here we collect some elementary facts about  the eigenvalues and minors of real matrices which will be needed in \S\S\ref{2.sec:putting_conditions_on_the_Jacobian}-\ref{2.sec:finding_spaces_X_and_Y}.




\begin{lemma}\label{2.lem:matrices_of_minors}
	For each $k,\ell\in \bbN,$ let
	\[
	T_{k,\ell}
	=\set{\vec{a}\in \bbN^k \suchthat 1 \leq a_1 <\dotsb<a_k\leq \ell}.
	\]
	This set has ${\ell \choose k}$ members. For each $k,\ell,m \in \bbN$ such that $k \leq \min\setbrax{\ell,m},$ and each $\ell \times m$ real matrix $L,$ define an ${\ell\choose k}\times {m\choose k}$ real matrix $\minorsMatrix{L}{k}$ by
	\[
	\minorsMatrix{L}{k} = \brax{\minorsMatrix{L}{k}_{\vec{a}\vec{b}}}_{\vec{a}\in T_{k,\ell},\vec{b}\in T_{k,m}},
	\qquad
	\minorsMatrix{L}{k}_{\vec{a}\vec{b}} = \det \brax{\brax{L_{a_i b_j}}_{1 \leq i, j \leq k} },
	\]
	so that the $\minorsMatrix{L}{k}_{\vec{a}\vec{b}}$ are the $k \times k$ minors of $L$. For all $\ell \times m$ matrices $L,$ all $m\times n$ matrices $M$ and all $k \leq \min\setbrax{\ell,m,n}$ we have $\minorsMatrix{(LM)}{k}=\minorsMatrix{L}{k}\minorsMatrix{M}{k}$. That is, we have
	\begin{equation}\label{2.eqn:matrix_multiplication_and_minors}
	\minorsMatrix{(LM)}{k}_{\vec{a}\vec{b}}
	=
	\sum_{\vec{w}\in T_{k,m}}
	\minorsMatrix{L}{k}_{\vec{a}\vec{w}}
	\minorsMatrix{M}{k}_{\vec{w}\vec{b}}.
	\end{equation}
\end{lemma}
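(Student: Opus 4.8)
The first claim, that $T_{k,\ell}$ has $\binom{\ell}{k}$ members, is immediate, since $T_{k,\ell}$ is exactly the collection of $k$-element subsets of $\set{1,\dotsc,\ell}$ listed in increasing order. The substance of the lemma is the identity \eqref{2.eqn:matrix_multiplication_and_minors}, which is the Cauchy--Binet formula; the plan is to prove it by expanding the determinant directly.

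First I would fix $\vec{a}\in T_{k,\ell}$ and $\vec{b}\in T_{k,n}$, write $P=LM$ so that $P_{a_i b_j}=\sum_{u=1}^{m}L_{a_i u}M_{u b_j}$, and apply the Leibniz formula to $\minorsMatrix{P}{k}_{\vec{a}\vec{b}}=\det\brax{\brax{P_{a_i b_j}}_{1\le i,j\le k}}$. Expanding each entry as a sum over $u$ and interchanging the order of summation turns this into
\begin{equation*}
\minorsMatrix{P}{k}_{\vec{a}\vec{b}}
=
\sum_{\vec{u}\in\set{1,\dotsc,m}^k}\brax[\bigg]{\prod_{i=1}^{k}L_{a_i u_i}}\det\brax{\brax{M_{u_i b_j}}_{1\le i,j\le k}},
\end{equation*}
where I have recognised the remaining sum over permutations as a determinant. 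The key observation is that $\det\brax{\brax{M_{u_i b_j}}_{1 \le i,j \le k}}$ vanishes whenever two of the indices $u_1,\dotsc,u_k$ coincide, since the corresponding matrix then has two equal rows; so only the tuples $\vec{u}$ with pairwise distinct entries contribute.

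Next I would group the surviving tuples by their increasing rearrangement: each $\vec{u}\in\set{1,\dotsc,m}^k$ with distinct entries equals $\brax{w_{\tau(1)},\dotsc,w_{\tau(k)}}$ for a unique $\vec{w}\in T_{k,m}$ and a unique $\tau\in S_k$, and since permuting the rows of $\brax{M_{w_i b_j}}_{1 \le i,j \le k}$ by $\tau$ multiplies its determinant by $\operatorname{sgn}(\tau)$, we have $\det\brax{\brax{M_{w_{\tau(i)} b_j}}_{1 \le i,j \le k}}=\operatorname{sgn}(\tau)\,\minorsMatrix{M}{k}_{\vec{w}\vec{b}}$. Hence, collecting the terms with underlying set $\set{w_1,\dotsc,w_k}$,
\begin{equation*}
\sum_{\tau\in S_k}\brax[\bigg]{\prod_{i=1}^{k}L_{a_i w_{\tau(i)}}}\operatorname{sgn}(\tau)\,\minorsMatrix{M}{k}_{\vec{w}\vec{b}}
=
\minorsMatrix{M}{k}_{\vec{w}\vec{b}}\sum_{\tau\in S_k}\operatorname{sgn}(\tau)\prod_{i=1}^{k}L_{a_i w_{\tau(i)}}
=
\minorsMatrix{L}{k}_{\vec{a}\vec{w}}\,\minorsMatrix{M}{k}_{\vec{w}\vec{b}},
\end{equation*}
the last step being the Leibniz formula once more. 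Summing over $\vec{w}\in T_{k,m}$ then yields \eqref{2.eqn:matrix_multiplication_and_minors}, and therefore $\minorsMatrix{(LM)}{k}=\minorsMatrix{L}{k}\minorsMatrix{M}{k}$.

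I do not anticipate a genuine obstacle here: the only point that needs care is the sign bookkeeping, namely tracking the factor $\operatorname{sgn}(\tau)$ introduced by reordering the rows of the $k\times k$ submatrix of $M$, together with the elementary fact that tuples with a repeated row index drop out. If one prefers to avoid the bookkeeping altogether, an alternative is to identify $\minorsMatrix{L}{k}$ with the matrix of the exterior power $\Lambda^k L$ in the standard bases $\setbrax{e_{w_1}\wedge\dotsb\wedge e_{w_k}\suchthat\vec{w}\in T_{k,m}}$, whereupon \eqref{2.eqn:matrix_multiplication_and_minors} is just the functoriality identity $\Lambda^k(LM)=\Lambda^k(L)\,\Lambda^k(M)$.
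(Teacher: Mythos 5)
Your proof is correct, but it follows a different route from the paper's. You give the classical direct proof of Cauchy--Binet: expand $\det\brax{\brax{(LM)_{a_i b_j}}}$ by Leibniz, interchange sums to get a sum over index tuples $\vec{u}$, discard tuples with repeated entries because the corresponding submatrix of $M$ has two equal rows, and then group the remaining tuples by their increasing rearrangement, tracking the factor $\operatorname{sgn}(\tau)$; the sign bookkeeping is handled correctly. The paper instead fixes $L$, $\vec{a}$, $\vec{b}$ and observes that both sides of \eqref{2.eqn:matrix_multiplication_and_minors} are alternating multilinear forms in the $k$ columns of $M$ indexed by $\vec{b}$; since such a form is determined by its values when those columns are standard basis vectors $\vecsuper{e}{z_1},\dotsc,\vecsuper{e}{z_k}$ with $\vec{z}\in T_{k,m}$, it suffices to verify the identity in that degenerate configuration, where both sides are computed immediately. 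The paper's argument is shorter and pushes all the permutation-sign business into the abstract alternating property, while yours is fully explicit and self-contained, at the cost of the combinatorial bookkeeping you acknowledge; your closing remark about $\Lambda^k(LM)=\Lambda^k(L)\,\Lambda^k(M)$ is in fact closer in spirit to the paper's multilinear-algebra viewpoint than your main argument is. Either proof is acceptable here, since the lemma is only used as a tool (for $\minorsMatrix{\brax{M^TM}}{k}$ and for orthogonality of $\minorsMatrix{O}{k}$) later in \S\ref{2.sec:minors_and_eigenvalues}.
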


\begin{proof}
	Let $\vecsuper{e}{1},\dotsc,\vecsuper{e}{m}$ be the standard basis of $\bbR^m$. Fix $L,\vec{a},\vec{b}$; then each side of \eqref{2.eqn:matrix_multiplication_and_minors} is an alternating multilinear form in those $k$ columns of $M$ whose indices appear in the vector $\vec{b}$. This is some $k$-tuple of $m$-vectors.
	
	If one is given the value of an alternating multilinear form at the $k$-tuple $\vecsuper{e}{z_1},\dotsc, \vecsuper{e}{z_k}$ for each $\vec{z} \in T_{k,m},$ one can extend by linearity and the alternating property to find its value at any $k$-tuple of $m$-vectors. In other words, it suffices to check \eqref{2.eqn:matrix_multiplication_and_minors} when, for some $\vec{z} \in T_{k,m},$ the  $k\times k$ submatrix $(M_{z_i b_j})_{1\leq i,j\leq k}$ is the identity and all other entries of $M$ are zero. In this case both sides of \eqref{2.eqn:matrix_multiplication_and_minors} are equal to $\minorsMatrix{M}{k}_{\vec{z}\vec{b}}$.
\end{proof}

\begin{lemma}\label{2.lem:minors_and_eigenvalues}
	Let $M$ be a real $m\times n$ matrix. Recall that $M^TM$ is positive semidefinite and symmetric. Let the eigenvalues of $M^TM$ be $\Lambda_1^2, \dotsb, \Lambda_n^2$ in decreasing order, where the $\Lambda_i$ are nonnegative and in decreasing order. That is, the $\Lambda_i$ are the \emph{singular values} of $M,$ listed in decreasing order.
	
	In particular, if $M$ is a symmetric matrix, then the $\Lambda_i$ are exactly the absolute values of the eigenvalues of $M,$ by diagonalisation.
	
	Given a natural number $k$ with $k \leq\min\brax{ m,n },$ let $\minors{k}$ be the vector of $k\times k$ minors of $M,$ arranged in some order. Then we have:
	
	\begin{enumerate}[(i)]
		\item\label{2.itm:minors_and_eigenvalues}
		The maximum norm $\supnorm{ \vecsuper{ \Delta }{ k } }$ satisfies	
		\begin{equation}
		\label{2.eqn:minors_and_eigenvalues}
		\supnorm{ \vecsuper{ \Delta }{ k } }
		\asymp_{m,n}
		{\Lambda_1\dotsm \Lambda_k}.
		\end{equation}
		\item\label{2.itm:space_where_matrix_is_big}
		There is a $k$-dimensional linear space $V\subset \bbR^n$ such that for all $\vec{v}\in V$,
		\begin{equation}\label{2.eqn:space_where_matrix_is_big}
		\supnorm{M \vec{v}}
		\gg_{m,n}
		\supnorm{\vec{v}} {\Lambda_k}.
		\end{equation}
		We may take $V$ to be a span of $k$ standard basis vectors $\vecsuper{e}{i}$ in $\bbR^n$.
		\item\label{2.itm:space_where_matrix_is_small}
		For any $C\geq 1,$ either there is an $(n-k+1)$-dimensional linear subspace $X$ of $\bbR^n$ such that
		\begin{align}\label{2.eqn:lambda-sub-k_<<_C^-1}
		\supnorm{M \vec{X}}
		&\leq
		C^{-1}\supnorm{\vec{X}}
		&\text{for all }\vec{X}\in X,
		\end{align}
		or there is a $k$-dimensional linear subspace $V $ of $\bbR^n,$ spanned by standard basis vectors of $\bbR^n,$ such that
		\begin{align*}
		\supnorm{M \vec{v}}
		&\gg_{m,n}
		C^{-1}\supnorm{\vec{v}}
		&\text{for all }\vec{v}\in V.
		\end{align*}
		
	
	\end{enumerate}
\end{lemma}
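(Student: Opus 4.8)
All three parts will be deduced from two standard facts about the $k\times k$ minors of $M$: the Cauchy--Binet identity $\minorsMatrix{(LM)}{k}=\minorsMatrix{L}{k}\minorsMatrix{M}{k}$ of Lemma~\ref{2.lem:matrices_of_minors}, and Cauchy's interlacing theorem for the eigenvalues of a principal submatrix of a real symmetric matrix. Write $\minorsMatrix{M}{k}$ for the $\binom{m}{k}\times\binom{n}{k}$ matrix whose entries are the components of $\vecsuper{\Delta}{k}$, and note that $\binom{m}{k}$ and $\binom{n}{k}$ are $O_{m,n}(1)$. For part~\ref{2.itm:minors_and_eigenvalues} the plan is to prove the single identity $\sum_{\vec a,\vec b}\brax{\minorsMatrix{M}{k}_{\vec a\vec b}}^2=e_k\brax{\Lambda_1^2,\dotsc,\Lambda_n^2}$, where $e_k$ is the $k$-th elementary symmetric polynomial. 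The left-hand side equals $\supnorm{\vecsuper{\Delta}{k}}^2$ up to a factor which is $O_{m,n}(1)$, and the right-hand side lies between $\Lambda_1^2\dotsm\Lambda_k^2$ and $\binom{n}{k}\Lambda_1^2\dotsm\Lambda_k^2$ because $\Lambda_1\geq\dotsb\geq\Lambda_n\geq 0$; taking square roots then gives \eqref{2.eqn:minors_and_eigenvalues}. To obtain the identity: applying Cauchy--Binet to $N^{T}N$, where $N$ is the submatrix of $M$ formed from a column set $\vec b$, shows that the $\vec b$-th principal minor of $M^{T}M$ equals $\sum_{\vec a}\brax{\minorsMatrix{M}{k}_{\vec a\vec b}}^2$; one then sums over $\vec b$ and uses the standard fact that the sum of the $k\times k$ principal minors of $M^{T}M$ is the coefficient of $t^{n-k}$ in $\det\brax{tI+M^{T}M}=\prod_i\brax{t+\Lambda_i^2}$, namely $e_k\brax{\Lambda_1^2,\dotsc,\Lambda_n^2}$.

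For part~\ref{2.itm:space_where_matrix_is_big}, part~\ref{2.itm:minors_and_eigenvalues} supplies a $k\times k$ minor of $M$, say on rows $\vec a_0$ and columns $\vec b_0$, with $\abs{\minorsMatrix{M}{k}_{\vec a_0\vec b_0}}\gg_{m,n}\Lambda_1\dotsm\Lambda_k$. Take $V$ to be the span of the standard basis vectors indexed by $\vec b_0$, and let $M'$ be the corresponding $m\times k$ column-submatrix, so that $M\vec v=M'\vec u$ for the coordinate vector $\vec u$ of $\vec v\in V$. Since $M'^{T}M'$ is the principal submatrix $\brax{M^{T}M}_{\vec b_0\vec b_0}$, Cauchy interlacing gives $\mu_i\leq\Lambda_i$ for its singular values $\mu_1\geq\dotsb\geq\mu_k$, while $\det\brax{M'^{T}M'}=\sum_{\vec a}\brax{\minorsMatrix{M}{k}_{\vec a\vec b_0}}^2\geq\brax{\minorsMatrix{M}{k}_{\vec a_0\vec b_0}}^2\gg_{m,n}\Lambda_1^2\dotsm\Lambda_k^2$. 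Dividing, $\mu_k^2=\det\brax{M'^{T}M'}\big/\prod_{i<k}\mu_i^2\geq\det\brax{M'^{T}M'}\big/\prod_{i<k}\Lambda_i^2\gg_{m,n}\Lambda_k^2$, so the least singular value of $M'$ is $\gg_{m,n}\Lambda_k$. Hence for $\vec v\in V$ we get $\supnorm{M\vec v}\geq m^{-1/2}\normDoubleBar{M'\vec u}_2\geq m^{-1/2}\mu_k\normDoubleBar{\vec u}_2\gg_{m,n}\Lambda_k\supnorm{\vec v}$, using $\normDoubleBar{\vec u}_2\geq\supnorm{\vec v}$ and the comparability of the $\ell^\infty$ and $\ell^2$ norms up to constants depending only on $m$ and $n$.

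For part~\ref{2.itm:space_where_matrix_is_small}, split according to the size of $\Lambda_k$. If $\Lambda_k\leq\brax{C\sqrt n}^{-1}$, let $X$ be the span of an orthonormal family of eigenvectors of $M^{T}M$ for the eigenvalues $\Lambda_k^2,\dotsc,\Lambda_n^2$; then $\dim X=n-k+1$ and for $\vec X\in X$ we have $\supnorm{M\vec X}\leq\normDoubleBar{M\vec X}_2\leq\Lambda_k\normDoubleBar{\vec X}_2\leq\Lambda_k\sqrt n\,\supnorm{\vec X}\leq C^{-1}\supnorm{\vec X}$. Otherwise $\Lambda_k>\brax{C\sqrt n}^{-1}$, and part~\ref{2.itm:space_where_matrix_is_big} provides a $k$-dimensional coordinate subspace $V$ with $\supnorm{M\vec v}\gg_{m,n}\Lambda_k\supnorm{\vec v}\gg_{m,n}C^{-1}\supnorm{\vec v}$ for all $\vec v\in V$.

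I expect the main obstacle to be part~\ref{2.itm:space_where_matrix_is_big}. One has to locate the required subspace \emph{among the coordinate subspaces} of $\bbR^n$ --- this is why it should be extracted from a large minor rather than from the top right singular vectors of $M$ --- and then bound the least singular value of the resulting column-submatrix from below. The interlacing inequalities $\mu_i\leq\Lambda_i$ for the small indices $i<k$ are exactly what makes the division argument succeed; the cruder bound $\mu_i\leq\Lambda_1$ would be useless here. Ensuring that every implied constant depends only on $m$ and $n$, uniformly in $M$ and in $k$, is the routine but slightly delicate part of the bookkeeping.
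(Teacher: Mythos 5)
Your proof is correct, and parts (i) and (iii) run essentially along the paper's lines: for (i) you use the Cauchy--Binet identity of Lemma~\ref{2.lem:matrices_of_minors} to get $\sum_{\vec a,\vec b}\brax{\minorsMatrix{M}{k}_{\vec a\vec b}}^2=e_k\brax{\Lambda_1^2,\dotsc,\Lambda_n^2}$ directly via the characteristic polynomial of $M^TM$, where the paper performs the same computation after first diagonalising $M^TM$ by an orthogonal matrix $O$ and checking that $\minorsMatrix{O}{k}$ is orthogonal; for (iii) both arguments take $X$ to be the span of the eigenvectors for $\Lambda_k^2,\dotsc,\Lambda_n^2$ and note that failure of the first alternative forces $\Lambda_k\gg C^{-1}$, whereupon part (ii) applies. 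Part (ii) is where you genuinely diverge. The paper permutes rows and columns so that the largest $k\times k$ minor sits in the leading position, takes $V$ to be the span of the first $k$ basis vectors, and expands a block determinant along the column containing $M\vec v$ to get $\supnorm{\vecsuper{\Delta}{k}}\abs{v_1}\ll_n\supnorm{M\vec v}\,\supnorm{\vecsuper{\Delta}{k-1}}$, which with part (i) gives $\Lambda_k\abs{v_1}\ll_{m,n}\supnorm{M\vec v}$; a final permutation reduces to the case $\supnorm{\vec v}=\abs{v_1}$. You instead take the column submatrix $M'$ singled out by a largest minor and bound its least singular value below, using $\det\brax{M'^TM'}=\sum_{\vec a}\brax{\minorsMatrix{M}{k}_{\vec a\vec b_0}}^2\gg_{m,n}\Lambda_1^2\dotsm\Lambda_k^2$ together with Cauchy interlacing $\mu_i\leq\Lambda_i$ for $i<k$. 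Both routes land on the same coordinate-subspace conclusion; yours imports interlacing (not otherwise used in the paper) but yields the bound for all $\vec v\in V$ in one stroke, with no need for the paper's closing permutation step, while the paper's cofactor-expansion argument is self-contained, relying only on part (i). One small point worth recording explicitly: when $\Lambda_k=0$ the claim in (ii) is trivial for any coordinate subspace, which disposes of the degenerate case in which your division by $\prod_{i<k}\mu_i^2$ would not make sense; when $\Lambda_k>0$ your lower bound on $\det\brax{M'^TM'}$ guarantees all $\mu_i>0$, so the argument is complete.
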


\begin{proof}[Proof. Part~\ref{2.itm:minors_and_eigenvalues}] \let\qedsymbol\relax
	First we prove the result on the assumption that $M^TM$ is diagonal. Let  the sets $T_{k,\ell}$ and the matrices $\minorsMatrix{L}{k}$  be as in Lemma~\ref{2.lem:matrices_of_minors}. Since $M^TM$ is diagonal with diagonal entries $\Lambda_i^2,$ we have
	\begin{align}
	\sum_{\vec{a} \in T_{k,n}} \Lambda_{a_1}^2\dotsm \Lambda_{a_k}^2
	&=
	\sum_{\vec{a} \in T_{k,n}}
	\minorsMatrix{\brax{M^TM}}{k}_{\vec{a}\vec{a}}
	\nonumber
	\\
	&=
	\sum_{\substack{ \vec{a}\in T_{k,n} \\ \vec{w} \in T_{k,m} }} \brax{\minorsMatrix{M}{k}_{\vec{w}\vec{a}}}^2,
	\label{2.eqn:minors_of_MTM}
	\end{align}
	by \eqref{2.eqn:matrix_multiplication_and_minors}. The left-hand side of \eqref{2.eqn:minors_of_MTM} is $\asymp_n \Lambda_1^2\dotsm \Lambda_k^2,$ and the right-hand side is $\asymp_{m,n} \supnorm{\vec{\Delta}^{(k)}}^2,$ so this proves \eqref{2.eqn:minors_and_eigenvalues}.
	
	Let $O$ be an $n \times n$ orthogonal matrix such that $O^TM^TMO$ is diagonal. Let $\vec{\widetilde{\Delta}}^{(k)}$ be the vector of $k\times k$ minors of $MO$. We claim that the norms $\supnorm{\vec{\widetilde{\Delta}}^{(k)}}$ and $\supnorm{\vec{\Delta}^{(k)}}$ are of comparable size.
	
	Lemma~\ref{2.lem:matrices_of_minors} shows that $\minorsMatrix{\brax{MO}}{k} = \minorsMatrix{M}{k}\minorsMatrix{O}{k},$ and since $\minorsMatrix{\brax{O^T}}{k}\minorsMatrix{O}{k} = \minorsMatrix{I}{k}$ and  $\minorsMatrix{\brax{O^T}}{k}_{\alpha\beta} = \minorsMatrix{O}{k}_{\beta\alpha}$ we see that $\minorsMatrix{O}{k}$ is orthogonal. Hence the maximum norm of the entries satisfies
	\[
	\supnorm{\vec{\widetilde{\Delta}}^{(k)}}
	=
	\supnorm{\minorsMatrix{(MO)}{k}}
	\asymp_{m,n}
	\supnorm{\minorsMatrix{M}{k}}
	=
	\supnorm{\vec{\Delta}^{(k)}}.
	\]
	So in proving \eqref{2.eqn:minors_and_eigenvalues} we may assume that $M^TM$ is diagonal. The result follows.
\end{proof}\begin{proof}[Part~\ref{2.itm:space_where_matrix_is_big}]\let\qedsymbol\relax
	By permuting the rows and columns of $M,$ we may assume that
	\begin{equation}\label{2.eqn:leading_minor_is_biggest}
	\supnorm{\vec{\Delta}^{(k)}}= \abs{\det \brax{M_{ij}}_{1\leq i,j\leq k}}.
	\end{equation}
	Let $\vec{v}$ be in the span of the first $k$ basis vectors. If \eqref{2.eqn:space_where_matrix_is_big} holds for all such $\vec{v}$ then we have proved the lemma.  Since $v_i = 0$ for $i>k,$ one finds that
	\begin{equation*}
	\left(
	\begin{array}{@{}c@{}}
	\begin{matrix}
	M_{11}
	&
	\cdots
	&
	M_{1n}
	\\
	\vdots
	&
	\ddots
	&
	\vdots
	\\
	M_{k1}
	&
	\cdots
	&
	M_{kn}
	\end{matrix}
	\\
	\begin{array}{@{\hspace{.2em}}c|c@{\hspace{.2em}}}
	\hline
	0_{(n-k)\times k}
	&
	I_{n-k}\bigstrut
	\end{array}
	\end{array}
	\right)
	\left(
	\begin{array}{@{}c@{\hspace{\arraycolsep}}c@{}}
	\vec{v}
	&
	\begin{array}{@{}|c@{}}
	0_{1\times(n-1)}
	\\
	\hline
	I_{n-1}
	\end{array}
	\end{array}
	\right)	
	=
	\left(
	\begin{array}{@{}c@{}}
	\begin{matrix}
	\brax{ M\vec{v} }_1
	&
	M_{12}
	&
	\cdots
	&
	M_{1n}
	\\
	\vdots
	&
	\vdots
	&
	\ddots
	&
	\vdots
	\\
	\brax{ M\vec{v} }_k
	&
	M_{k2}
	&
	\cdots
	&
	M_{kn}
	\end{matrix}
	\\
	\begin{array}{@{\hspace{\arraycolsep}\hspace{.8em}}c@{\hspace{\arraycolsep}\hspace{.8em}}|@{\hspace{\arraycolsep}\hspace{.8em}}c@{\hspace{\arraycolsep}\hspace{.8em}}}
	\hline
	0_{(n-k)\times k}
	&
	I_{n-k}
	\bigstrut
	\end{array}
	\end{array}
	\right)
	\end{equation*}
	where we have divided each matrix into three blocks, and $0_{p\times q}$ stands for a $p\times q$ block of zeroes. By \eqref{2.eqn:leading_minor_is_biggest} we have $\pm v_1\cdot\supnorm{ \vec{\Delta}^{(k)}} $ as the determinant of the left-hand side. Expanding the determinant of the right-hand side in the first column, we find that it is equal to
	\begin{align*}
	\pm\supnorm{ \vec{\Delta}^{(k)}} v_1
	&=
	\sum_{\ell=1}^k
	(-1)^{\ell+1}
	\brax{ M\vec{v} }_\ell
	\det
	\brax[\big]{ \brax[\big]{ M_{ij} }_{\substack{ i = 1,\dotsc,k; \,i \neq \ell \\ j = 2,\dotsc,k }} }
	\\
	&\ll
	k
	\supnorm{ M\vec{v} } \supnorm{ \vec{\Delta}^{(k-1)} }.
	\end{align*}
	Note the $(k-1)\times(k-1)$ determinant in which $i$ runs over $1,\dotsc, k$ with the value $\ell$ omitted, and $j$ runs over $2,\dotsc,k$.
	
	By part~\ref{2.itm:minors_and_eigenvalues}, this implies that $\Lambda_k v_1 \ll_{m,n} \supnorm{ M\vec{v} },$ so provided that $ \supnorm{ \vec{v} } =\abs{v_1} ,$ then \eqref{2.eqn:space_where_matrix_is_big} holds.
	
	If we apply the same permutation both to the $v_i$ and to the first $k$ rows of $M,$ then both sides of our claim \eqref{2.eqn:space_where_matrix_is_big} and our assumption \eqref{2.eqn:leading_minor_is_biggest} remain the same. By applying such a permutation we may assume $ \supnorm{ \vec{v} } =\abs{v_1} ,$ and so we have proved \eqref{2.eqn:space_where_matrix_is_big}.
\end{proof}\begin{proof}[Part \protect\ref{2.itm:space_where_matrix_is_small}]
	Let $X$ be the span of the $\Lambda_i^2$-eigenvectors of $M^TM,$ where $i$ runs from $k$ up to $n$. As the matrix $M^TM$ is symmetric, we have $\vec{X}^T M^TM\vec{X}\ll_n \supnorm{\vec{X}}^2 \Lambda_k^2$ for all $\vec{X}\in X,$ and so
	\begin{align*}
		\supnorm{M \vec{X}}
		\ll_{m,n}
		\supnorm{\vec{X}}\Lambda_k
	\end{align*}
	for all $\vec{X}\in X.$ Therefore either this space $X$ satisfies \eqref{2.eqn:lambda-sub-k_<<_C^-1}, or the bound $\Lambda_k\gg_{m,n} C^{-1}$ holds and the existence of the space $V$ follows by part~\ref{2.itm:space_where_matrix_is_big}.
\end{proof}

\section{Counting points in the sets $K_k\brax{E_1,\dotsc,E_{k+1}}$}\label{2.sec:putting_conditions_on_the_Jacobian}

In this section our goal is to estimate the number of integer points in the sets $K_k\brax{E_1,\dotsc,E_{k+1}}$ from Definition~\ref{2.def:the_sets_K-sub-k}. We give the following result.

\begin{lemma}\label{2.lem:union_of_cubes}
	Let $c$ and $\cubicMatrix$ be as in Definition~\ref{2.def:aux_ineq}, let $\cubicEigenvalue{ i },$ $\cubicMinors{ i },$ $\cubicMinorsJacobian{ i }$ be as in Definition~\ref{2.def:cubic_eigenvalues}, and let $K_k\brax{E_1,\dotsc,E_{k+1}}$ be as in Definition~\ref{2.def:the_sets_K-sub-k}. Suppose that $B,C \geq 1,$  $\sigma \in \set{0,\dotsc, n-1},$ and $k\in\set{0,\dotsc,n-\sigma-1},$ and that $CB\geq E_1\geq E_{k+1}\geq 1$. Then at least one of the following holds:
	\begin{enumerate}[label=(\Roman*)$_{k}$,ref=(\Roman*)]
		\item\label{2.itm:union_of_cubes}
		The set $K_k(E_1,\dotsc,E_{k+1})$ may be covered by a collection of at most
		\[
		O_{C,n}\brax{B^{\sigma} (E_1 \dotsm E_{k+1}) E_{k+1}^{-\sigma-k-1} }
		\]
		boxes in $\bbR^n$ of side $E_{k+1}$. Such a box contains $O_n(E_{k+1}^n)$ integral points, so it follows that
		\begin{equation}\label{2.eqn:integer_points_in_E-cubes}
		\#\setbrax{\bbZ^n\cap K_{k}\brax{E_1,\dotsc,E_{k+1}}}
		\ll_{C,n}
		B^{\sigma} (E_1 \dotsm E_{k+1}) E_{k+1}^{n-\sigma-k-1}.
		\end{equation}
		
		\item\label{2.itm:some_Jacobian_close_to_having_small_rank}
		There exist an integer $1 \leq b \leq k,$ a point $\vecsuper{x}{0}\in K_{b}\brax{E_1,\dotsb,E_{b+1}},$ and a $(\sigma+b+1)$-dimensional linear subspace $X$ of $ \bbR^n$ such that
		\begin{align}
		\nonumber
		E_{b+1}
		&<
		C^{-1}E_{b},
		\\
		\label{2.eqn:Jacobian_close_to_having_small_rank}
		\supnorm{\cubicMinorsJacobianAt{ b+1 }{ \vecsuper{x}{0} } \vec{X}}
		&\leq
		C^{-1}
		\supnorm{ \cubicMinorsAt{ b }{ \vecsuper{x}{0} } }\supnorm{ \vec{X} }
		&\text{for all } \vec{X}\in X.
		\end{align}
	\end{enumerate}\begin{enumerate}[resume,label=(\Roman*)]
		\item\label{2.itm:c_close_to_a_cone}
		There is a $(\sigma+1)$-dimensional linear subspace $X$ of $ \bbR^n $ such that
		\begin{align}
		\label{2.eqn:c_close_to_a_cone_at_many_points}
		\supnorm{\cubicMatrixAt{ \vec{X} }}
		&\leq
		C^{-1}
		\supnorm{ \vec{X} }
		&\text{for all } \vec{X}\in X,
		\end{align}
		with $\supnorm{ \cubicMatrixAt{ \vec{X} } }$ as in \S\ref{2.sec:outline}.
	\end{enumerate}
	We have subscripted the first two items to emphasize their dependence on $k$; note that item~\ref{2.itm:c_close_to_a_cone} has no such dependence.
\end{lemma}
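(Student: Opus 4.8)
The plan is to argue by induction on $k$, using Lemma~\ref{2.lem:minors_and_eigenvalues} to convert statements about eigenvalues of $\cubicMatrix$ into statements about the minors $\cubicMinors{k}$ and the Jacobians $\cubicMinorsJacobian{k}$, and then to covering a variety by small boxes. First I would dispose of the base case $k=0$. On $K_0(E_1)$ one has $\supnorm{\cubicMatrixAt{\vec{x}}}\asymp_n\abs{\cubicEigenvalue{1}}\leq E_1$, so either $E_1<C^{-1}$ and the whole set $K_0(E_1)$ lies in a bounded box (giving the covering in \ref{2.itm:union_of_cubes} with $E_1$ playing the role of $E_{k+1}$, since $\sigma+k+1\geq 1$ and the count of boxes is $O_{C,n}(B^\sigma E_1^{-\sigma})$), or $E_1\geq C^{-1}$, in which case part~\ref{2.itm:space_where_matrix_is_small} of Lemma~\ref{2.lem:minors_and_eigenvalues} applied to $M=\cubicMatrixAt{\vec{x}}$ for a suitable $\vec x$ — or rather, applied directly using that $\supnorm{\cubicMatrixAt{\vec X}}$ is linear in $\vec X$ — yields the $(\sigma+1)$-dimensional space $X$ of alternative~\ref{2.itm:c_close_to_a_cone}. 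Actually the cleanest route is: if no point of $K_0(E_1)$ can be covered then $K_0(E_1)$ is Zariski-dense enough to force $\rank\cubicMatrix$ to drop generically, and we fall into one of the later alternatives; I will need to think carefully about whether the case split is "$E_1<C^{-1}$ versus not" or whether one really needs the variety-covering machinery already at $k=0$.

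For the inductive step, suppose the conclusion holds with $k-1$ in place of $k$ and consider $K_k(E_1,\dotsc,E_{k+1})$. On this set we have $\abs{\cubicEigenvalue{i}}\asymp E_i$ for $i\leq k$ and $\abs{\cubicEigenvalue{i}}\leq E_{k+1}$ for $i>k$, so by Lemma~\ref{2.lem:minors_and_eigenvalues}\ref{2.itm:minors_and_eigenvalues} we get $\supnorm{\cubicMinors{k}}\asymp_n E_1\dotsm E_k$ and $\supnorm{\cubicMinors{k+1}}\ll_n E_1\dotsm E_k E_{k+1}$ pointwise on $K_k$. The idea is to bound $\#\{\bbZ^n\cap K_k\}$ by covering $K_k$ with boxes of side $E_{k+1}$. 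One partitions the ambient box $\supnorm{\vec x}\leq B$ into $O((B/E_{k+1})^n)$ such boxes and asks which ones meet $K_k$: on each such box the forms $\cubicMinors{k+1}$ are, up to the rescaling by $\supnorm{\cubicMinors{k}}$, "small" — of size $\ll C^{-1}\supnorm{\cubicMinors{k}}$, say — so that the box lies near the variety $\cubicMinorsAt{k+1}{\vec x}=\vec 0$; a point–counting / Bézout-type estimate then bounds the number of relevant boxes by $O_{C,n}(B^\sigma E_1\dotsm E_{k+1}E_{k+1}^{-\sigma-k-1})$, which is \ref{2.itm:union_of_cubes}. The case where this covering \emph{fails} is exactly where there is a box on which $\cubicMinors{k+1}$ is not everywhere small relative to $\cubicMinors{k}$ — equivalently, by Taylor expansion, a point $\vecsuper{x}{0}$ at which $\cubicMinorsJacobian{k+1}$ has small image in the sense of \eqref{2.eqn:Jacobian_close_to_having_small_rank} — and here one either lands directly in \ref{2.itm:some_Jacobian_close_to_having_small_rank} with $b=k$, or, if the hypothesis $E_{k+1}<C^{-1}E_k$ fails (so the $k$-th and $(k{+}1)$-st eigenvalue ranges are comparable and the "$k$ versus $k{-}1$" distinction collapses), one feeds $K_k(E_1,\dotsc,E_{k+1})\subseteq K_{k-1}(E_1,\dotsc,E_k)$ (valid because $E_{k+1}$ and $E_k$ differ by at most a factor $C$, up to adjusting constants) into the inductive hypothesis at level $k-1$ and inherits one of the three alternatives there, with the value of $b$ in \ref{2.itm:some_Jacobian_close_to_having_small_rank} unchanged.

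To make the covering estimate precise I would use part~\ref{2.itm:space_where_matrix_is_small} of Lemma~\ref{2.lem:minors_and_eigenvalues} applied to the matrix $M=\cubicMinorsJacobianAt{k+1}{\vecsuper{x}{0}}$ at a suitably chosen point: if for \emph{every} point $\vecsuper x0\in K_k$ the first alternative of that lemma — a large space $X$ on which $M\vec X$ is small — holds, we get \ref{2.itm:some_Jacobian_close_to_having_small_rank}; otherwise the second alternative gives, at every point of $K_k$, a coordinate subspace on which $\cubicMinorsJacobian{k+1}$ is uniformly large, which is precisely the nondegeneracy needed to run the box-covering of the near-variety $\{\cubicMinors{k+1}\approx 0\}$ and obtain \ref{2.itm:union_of_cubes}. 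The main obstacle will be the last step: controlling the number of side-$E_{k+1}$ boxes meeting the set where $\cubicMinors{k+1}$ is small-but-not-necessarily-zero, using only a lower bound on (part of) the Jacobian $\cubicMinorsJacobian{k+1}$ and the normalisation $\supnorm{\cubicMinors{k}}\asymp E_1\dotsm E_k$. This is a quantitative implicit-function-theorem / "tube around a variety" count, and getting the exponent $E_{k+1}^{n-\sigma-k-1}$ exactly right — in particular tracking that the variety $\cubicMinorsAt{k+1}{\vec x}=\vec 0$ has codimension $\geq\sigma+k+1$ near the relevant points, which is where the hypothesis $k\leq n-\sigma-1$ and the definition of $\sigma$ enter — is the delicate part; the eigenvalue/minor bookkeeping and the reduction to smaller $k$ are comparatively routine.
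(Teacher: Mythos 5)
There are two genuine gaps. First, your base case $k=0$ does not work as sketched: since $C\geq 1$ and $E_1\geq 1$, the split ``$E_1<C^{-1}$ versus not'' is vacuous, and smallness of $\cubicEigenvalueAt{1}{\vec{x}}$ does not confine $K_0(E_1)$ to a bounded box --- $K_0(E_1)$ can spread through the whole box $\supnorm{\vec{x}}\leq B$, and alternative~\ref{2.itm:union_of_cubes}$_0$ demands a cover by $O_{C,n}(B^{\sigma}E_1^{-\sigma})$ boxes of side $E_1$, a nontrivial thinness statement. The missing mechanism (the paper's) exploits that $\vec{x}\mapsto\cubicMatrix$ is \emph{linear}: if~\ref{2.itm:c_close_to_a_cone} fails, then Lemma~\ref{2.lem:minors_and_eigenvalues} (parts~\ref{2.itm:space_where_matrix_is_big} and~\ref{2.itm:space_where_matrix_is_small}), applied to the matrix of this linear map, yields an $(n-\sigma)$-dimensional coordinate subspace $V$ with $\supnorm{\cubicMatrixAt{\vec{v}}}\gg_n C^{-1}\supnorm{\vec{v}}$ for all $\vec{v}\in V$; then for $\vec{y},\vec{z}\in K_0(E_1)$ one has $\supnorm{\cubicMatrixAt{\vec{y}-\vec{z}}}\ll_n E_1$ by linearity and \eqref{2.eqn:lambda-sub-1_<<_B}, so the $V$-component of $\vec{y}-\vec{z}$ is $O_n(CE_1)$, i.e.\ $K_0(E_1)$ is thin in the $V$-directions, and slicing the box of side $B$ along $V^{\perp}$ gives the required cover. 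Nothing in your sketch supplies this ``difference'' argument, and your fallback (``Zariski-dense enough to force the rank to drop'') is not an argument.

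Second, in the inductive step the decisive covering estimate is exactly the step you leave open, and the route you propose for it is not available. You want a ``tube around the variety $\cubicMinorsAt{k+1}{\vec{x}}=\vec{0}$'' count justified by that variety having codimension $\geq\sigma+k+1$, ``which is where the definition of $\sigma$ enters''; but the lemma has no hypothesis linking $\sigma$ to $c$ --- $\sigma$ is an arbitrary element of $\set{0,\dotsc,n-1}$ and the statement is an unconditional trichotomy, and the failure of any such codimension statement is precisely the situation in which~\ref{2.itm:some_Jacobian_close_to_having_small_rank}$_k$ or~\ref{2.itm:c_close_to_a_cone} is meant to hold. Negating the $b=k$ case of~\ref{2.itm:some_Jacobian_close_to_having_small_rank}$_k$ and applying Lemma~\ref{2.lem:minors_and_eigenvalues} only gives, at each point $\vecsuper{x}{0}\in K_k$, an $(n-\sigma-k)$-dimensional coordinate space $V$ on which $\supnorm{\cubicMinorsJacobianAt{k+1}{\vecsuper{x}{0}}\vec{v}}\geq C^{-1}\supnorm{\cubicMinorsAt{k}{\vecsuper{x}{0}}}\supnorm{\vec{v}}$; this is pointwise information with no degree or global codimension content usable in a B\'ezout count. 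The paper obtains~\ref{2.itm:union_of_cubes}$_k$ without any variety count: it covers $K_{k-1}\supset K_k$ by the inductive alternative~\ref{2.itm:union_of_cubes}$_{k-1}$ with boxes of side $E_k$, subdivides each anisotropically into boxes of side $E_{k+1}$ in the $\sigma+k$ directions of $V^{\perp}$ and $\epsilon E_k$ along $V$, and then shows, by Taylor expansion of $\cubicMinors{k+1}$ about a centre $\vec{z}$ at which the Jacobian lower bound holds, together with $\supnorm{\cubicMinorsAt{j}{\vec{z}}}\asymp_n E_1\dotsm E_j$ from Lemma~\ref{2.lem:minors_and_eigenvalues}\ref{2.itm:minors_and_eigenvalues}, that the part of $K_k$ in such a box also has diameter $O_{C,n}(E_{k+1})$ in the $V$-directions; this is where the assumption $E_{k+1}<C^{-1}E_k$ is genuinely needed (when it fails one simply subdivides the $E_k$-boxes into $O_{C,n}(1)$ boxes of side $E_{k+1}$; note also that $K_k\subset K_{k-1}$ holds unconditionally, not ``because $E_k$ and $E_{k+1}$ differ by at most a factor $C$''). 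Without an argument of this kind the exponent $B^{\sigma}(E_1\dotsm E_{k+1})E_{k+1}^{n-\sigma-k-1}$ in \eqref{2.eqn:integer_points_in_E-cubes} is unjustified.
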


In Corollary~\ref{2.cor:applying_davenport's_trick} below, we will use Lemma~\ref{2.lem:union_of_cubes} to bound the quantities \eqref{2.eqn:many_x_with_H_small} and \eqref{2.eqn:many_x_with_prescribed_eigenvalues} from Corollary~\ref{2.cor:many_x_with_prescribed_eigenvalues}. Before proving the lemma, we give a comparison with  step~\ref{2.itm:count_x_assuming_Delta^k=0_nonsingular} in \S\ref{2.sec:outline}. 

If there are many integer points $\vec{x}$ for which $\rank \cubicMatrixAt{ \vec{x} } = b$ holds, then step~\ref{2.itm:count_x_assuming_Delta^k=0_nonsingular} gives us a point $\vecsuper{x}{0}$ for which the matrix $\cubicMinorsJacobianAt{ b+1 }{\vecsuper{x}{0}}$ has a kernel of dimension $(\sigma+b+1)$ or more and $\rank \cubicMatrixAt{ \vecsuper{x}{0} } = b$ holds.

If there are many integer points $\vec{x}$ for which $\vec{x}\in K_k(E_1,\dotsc,E_{k+1}),$ then \eqref{2.eqn:integer_points_in_E-cubes} is false and so either~\ref{2.itm:some_Jacobian_close_to_having_small_rank}$_{k}$ or~\ref{2.itm:c_close_to_a_cone} must hold. Of these, case~\ref{2.itm:some_Jacobian_close_to_having_small_rank}$_{k}$ gives us a point $\vecsuper{x}{0}$ such that $\cubicMinorsJacobianAt{ b+1 }{\vecsuper{x}{0}}$ is small on a $(\sigma+b+1)$-dimensional space. Moreover it states that $\vecsuper{x}{0}\in K_{b}\brax{E_1,\dotsb,E_{b+1}}$ and that $E_{b+1}<C^{-1}E_b,$ so that the $(b+1)$th eigenvalue of the matrix $\cubicMatrixAt{\vecsuper{x}{0}}$ is about $C$ times smaller than the $b$th eigenvalue. Thus~\ref{2.itm:some_Jacobian_close_to_having_small_rank}$_{k}$ gives us a point $\vecsuper{x}{0}$ for which in some sense $\cubicMinorsJacobianAt{ b+1 }{\vecsuper{x}{0}}$  is close to having a kernel of dimension at least $(\sigma+b+1)$ and $\cubicMatrixAt{\vecsuper{x}{0}}$ is close to having rank $b$.

The third case~\ref{2.itm:c_close_to_a_cone} is less directly comparable to step~\ref{2.itm:count_x_assuming_Delta^k=0_nonsingular}. We suggest that it could correspond to the case $b=0$ of  step~\ref{2.itm:count_x_assuming_Delta^k=0_nonsingular}.

\begin{proof}[Proof of Lemma \ref{2.lem:union_of_cubes}]\let\qedsymbol\relax
	The proof is by induction on $k$. Let $c,$ $C,$ $B,$ and $\sigma$ be fixed.
\end{proof}\begin{proof}[The case $k=0$]\let\qedsymbol\relax
	Let $k=0,$  let $CB \geq E_1 \geq 1$ and suppose that alternative~\ref{2.itm:c_close_to_a_cone} does not hold. We claim that alternative~\ref{2.itm:union_of_cubes}$_{0}$ holds, that is $K_0(E_1)$ is covered by $O_{C,n}(B^{\sigma} /E_1^{\sigma})$ boxes of side $E_1$.
	
	As~\ref{2.itm:c_close_to_a_cone}$_{0}$ is false, applying Lemma~\ref{2.lem:minors_and_eigenvalues}\ref{2.itm:space_where_matrix_is_big} to the matrix of the linear map $\vec{x}\mapsto\cubicMatrix$ shows that there is an $(n-\sigma)$-dimensional subspace  $V$ of $ \bbR^n$ with
	\begin{align}\label{2.eqn:space_where_cubic_matrix_large}
	\supnorm{ \cubicMatrixAt{ \vec{v} } }
	&\gg_n
	C^{-1}\supnorm{ \vec{v} }
	&\text{for all }\vec{v}\in V.
	\end{align}

	
	For each $\vec{z}\in\bbR^n,$ let $A_0(\vec{z})$ be the box in $\bbR^n$ defined by
	\[
	A_0(\vec{z})
	=
	\set{\vec{z}+\vec{u}+\vec{v} \suchthat \vec{u}\in V^\perp, \vec{v}\in V,\supnorm{\vec{u}}\leq E_1, \supnorm{\vec{v}}\leq B}.
	\]
	Now $K_0(E_1)$ is contained in the box $\supnorm{\vec{x}}\leq B$. It follows that we can cover $K_0(E_1)$ with a collection of $O_{C,n}(B^{\sigma}/E_1^{\sigma})$ boxes of the form $A_0(\vec{z}),$ each one of which is centred at a point $\vec{z}$ belonging to $K_0(E_1)$.	We will show below that for each $\vec{z}\in K_0(E_1),$ the intersection $A_0(\vec{z})\cap K_0(E_1)$ is contained in a box of side $O_{C,n}(E_1)$. It follows that $K_0(E_1)$ is covered by $O_{C,n}(B^{\sigma}/E_1^{\sigma})$ boxes of side $E_1,$ as claimed.
	
	It remains to let $\vec{z}\in K_0(E_1)$ and let $\vec{y}\in A_0(\vec{z})\cap K_0(E_1),$ and to deduce that $\supnorm{\vec{y}-\vec{z}}\ll_{C,n}E_1$ must hold.
	
	By definition of $K_0(E_1)$ we have $\abs{\cubicEigenvalueAt{1}{\vec{y}}}\leq E_1$ and $\abs{\cubicEigenvalueAt{1}{\vec{z}}} \leq E_1,$ and the bounds $\supnorm{ \cubicMatrixAt{ \vec{y} } }\ll_n E_1$  and  $\supnorm{ \cubicMatrixAt{ \vec{z} } } \ll_n E_1$ follow by \eqref{2.eqn:lambda-sub-1_<<_B}. So we have
	\begin{equation}
	\label{2.eqn:covering_the_set_of_small_H}
	\supnorm{\cubicMatrixAt{ \vec{y} - \vec{z} }} \ll_n E_1.
	\end{equation}	
	Let $\vec{u}\in V^\perp$ and let $\vec{v}\in V$ such that $
	\vec{y}
	=\vec{z}+ \vec{u}+\vec{v}$ holds. Since $\vec{y}$ lies in $A_0(\vec{z}),$ we have $\supnorm{\vec{u}}\leq E_1,$ and with \eqref{2.eqn:covering_the_set_of_small_H} this implies that
	\[
	\supnorm{ \cubicMatrixAt{ \vec{v} } } \ll_n E_1.
	\]
	By \eqref{2.eqn:space_where_cubic_matrix_large} it follows that $\supnorm{ \vec{v} } \ll_n C E_1,$ and hence that $\supnorm{\vec{y}-\vec{z}}\ll_{C,n}E_1,$ as claimed.
\end{proof}\begin{proof}[The inductive step] Let $k\geq 1$ and let $CB \geq E_1\geq \dotsb E_{k+1}\geq 1$. We suppose that~\ref{2.itm:some_Jacobian_close_to_having_small_rank}$_{k}$ and~\ref{2.itm:c_close_to_a_cone} are both false, and claim that~\ref{2.itm:union_of_cubes}$_{k}$ holds.
 
 By induction, at least one of~\ref{2.itm:union_of_cubes}$_{k-1}$,~\ref{2.itm:some_Jacobian_close_to_having_small_rank}$_{k-1}$, or~\ref{2.itm:c_close_to_a_cone}  holds.  Note that of these~\ref{2.itm:c_close_to_a_cone} is false by assumption, and~\ref{2.itm:some_Jacobian_close_to_having_small_rank}$_{k-1}$ is false since it implies~\ref{2.itm:some_Jacobian_close_to_having_small_rank}$_{k}$,  and so~\ref{2.itm:union_of_cubes}$_{k-1}$ must hold.
 
	 Suppose for the time being that
	 \begin{equation}\label{2.eqn:case_E-sub-k+1_small}
	 E_{k+1} < C^{-1}E_k.
	 \end{equation}
	 The contrary case is almost trivial and will be dealt with at the end of the proof. We claim that
		\begin{equation}\label{2.eqn:covering_K-sub-k}
			K_k(E_1,\dotsc,E_{k+1})
			=\bigcup_V
			K_k^{(C,V)}\brax{ E_1,\dotsc,E_{k+1}},
		\end{equation}
		where $V$ runs over those $(n-\sigma-k)$-dimensional subspaces of $\bbR^n$ which are spanned by standard basis vectors, and we define
		\begin{multline}
		K_k^{(C,V)}\brax{ E_1,\dotsc,E_{k+1}}
		\eqdef
		\\
		\big\{ 
		\vec{x}\in K_k\brax{ E_1,\dotsc,E_{k+1}}
		\suchthat
		\supnorm{\cubicMinorsJacobian{k+1} \vec{v}}
		\geq
		C^{-1}
		\supnorm{ \cubicMinors{ k } }\supnorm{ \vec{v} }
		\\
		\text{ for all }
		\vec{v} \in V
		\big\}.
		\label{2.eqn:set_of_x_where_J_is_far_from_having_small_rank}
		\end{multline}	
		We have assumed that $E_{k+1} < C^{-1}E_k$ and  that the case $b=k$ of~\ref{2.itm:some_Jacobian_close_to_having_small_rank}$_{k}$ is false. So the case $b=k$ of \eqref{2.eqn:Jacobian_close_to_having_small_rank} must be false for every $(\sigma+b+1)$-dimensional subspace $X$ of $\bbR^n$ and every 
		$\vecsuper{x}{0}\in K_k(E_1,\dotsc,E_{k+1})$.
	
		That is, for any $\vecsuper{x}{0}\in K_k(E_1,\dotsc,E_{k+1})$ and  any $(\sigma+k+1)$-dimensional linear subspace $X$ of $\bbR^n,$ there is some $	\vec{X}\in X$ such that
		\begin{equation*}
		\supnorm{\cubicMinorsJacobianAt{k+1}{ \vecsuper{x}{0} } \vec{X}}
		>
		C^{-1}
		\supnorm{ \cubicMinorsAt{ k }{ \vecsuper{x}{0} } }\supnorm{ \vec{X} }.
		\end{equation*}
		Applying Lemma~\ref{2.lem:minors_and_eigenvalues}\ref{2.itm:space_where_matrix_is_big} with the choice $M = \cubicMinorsJacobianAt{ k+1 }{ \vecsuper{x}{0} }$ shows that for each $\vecsuper{x}{0}\in K_k(E_1,\dotsc,E_{k+1})$ there is an $(n-\sigma-k)$-dimensional subspace $V$ of $\bbR^n,$ spanned by standard basis vectors, such that
		\begin{equation}\label{2.eqn:space_where_matrix_is_big_application}
		\supnorm{\cubicMinorsJacobianAt{k+1}{ \vecsuper{x}{0} } \vec{v}}
		\geq
		C^{-1}
		\supnorm{ \cubicMinorsAt{ k }{ \vecsuper{x}{0} } }\supnorm{ \vec{v} }
		\end{equation}
		for all $\vec{v}\in V$. This proves \eqref{2.eqn:covering_K-sub-k}, and so to prove~\ref{2.itm:union_of_cubes}$_{k}$ it now suffices to show that for each $(n-\sigma-k)$-dimensional space $V,$ the set  \eqref{2.eqn:set_of_x_where_J_is_far_from_having_small_rank} is covered by a union of $O_{C,n}\brax{B^{\sigma}(E_1\dotsm E_{k+1})E_{k+1}^{-\sigma-k-1}}$ boxes of side $E_{k+1}$.
		
		Let $\epsilon>0$ be a sufficiently small constant depending at most on $C$ and $n,$ and for each $\vec{z}\in\bbR^n$ set
		\begin{equation}\label{2.eqn:def_of_A-sub-k}
		A_k(\vec{z})
		\eqdef
		\set{\vec{z}+\vec{u}+\vec{v} \suchthat \vec{u}\in V^\perp, \vec{v}\in V,\supnorm{\vec{u}}\leq E_{k+1}, \supnorm{\vec{v}}\leq \epsilon E_k}.
		\end{equation}
		We assumed at the start of this inductive step that~\ref{2.itm:union_of_cubes}$_{k-1}$ holds, and so $K_{k-1}\brax{E_1,\dotsc,E_k}$ is covered by a collection of $O_{C,n}(B^{\sigma}(E_1\dotsm E_k)E_k^{-\sigma-k})$ boxes of side $E_k$. Subdivide each of these boxes into $O_{C,n}\brax{E_{k}^{\sigma+k}/E_{k+1}^{\sigma+k}}$ sub-boxes of the form $A_k(\vec{z})$. Since		
		\[
		K_k^{(C,V)}\brax{ E_1,\dotsc,E_{k+1}}\subset K_{k-1}\brax{E_1,\dotsc,E_k},
		\]
		it follows that the set $K_k^{(C,V)}\brax{ E_1,\dotsc,E_{k+1}}$ may be covered by a collection of $O_{C,n}\brax{B^{\sigma}(E_1\dotsm E_{k+1})E_{k+1}^{-\sigma-k-1}}$ boxes of the form $A_k(\vec{z}),$ each of which is centred at a point $\vec{z}$ belonging to the set $K_k^{(C,V)}\brax{ E_1,\dotsc,E_{k+1}}$.
		
		We will show below that for each such box $A_k(\vec{z}),$ the intersection $A_k(\vec{z})\cap K_k^{(C,V)}\brax{ E_1,\dotsc,E_{k+1}}$ is covered by a box of side $O_{C,n}(E_{k+1})$. It follows that each set \eqref{2.eqn:set_of_x_where_J_is_far_from_having_small_rank} is covered by $O_{C,n}\brax{B^{\sigma}(E_1\dotsm E_{k+1})E_{k+1}^{-\sigma-k-1}}$ boxes of side $E_{k+1},$ and by the comments after \eqref{2.eqn:space_where_matrix_is_big_application} this proves the lemma.
		
		We suppose that $\vec{z}\in K_k^{(C,V)}\brax{ E_1,\dotsc,E_{k+1}}$ and that $\vec{y}\in A_k(\vec{z})\cap K_k^{(C,V)}\brax{ E_1,\dotsc,E_{k+1}},$ and we claim that $\supnorm{\vec{y}-\vec{z}}\ll_{C,n} E_{k+1}$ holds. Let $\vec{u}\in V^\perp$ and let $\vec{v}\in V$ such that
		$
		\vec{y}
		=
		\vec{z}
		+\vec{u}
		+\vec{v}
		,$ and note that since $\vec{y}\in A_k(\vec{z}),$ we have
		\begin{equation}\label{2.eqn:bounds_on_u_and_v}
		\supnorm{\vec{u}}\leq E_{k+1},
		\qquad
		\supnorm{\vec{v}}\leq \epsilon E_k.
		\end{equation}
		
		Now the $j$th partial derivatives of the $(k+1)\times(k+1)$ minors $\cubicMinors{ k+1 }$ are linear combinations of the $(k+1-j)\times(k+1-j)$ minors $\cubicMinors{ k+1-j }$ with coefficients of size at most $O_n(1)$. So we have
		\begin{equation*}
		\supnormBig{\frac{\partial^{j} \cubicMinors{k+1} }{ \partial x_{i_1} \dotsm \partial x_{i_j} }}
		\ll_n
		\supnorm{\cubicMinors{k+1-j}},
		\end{equation*}
		and Taylor expansion shows that
		\begin{multline*}
	\cubicMinorsAt{ k+1 }{ \vec{z}+\vec{u}+\vec{v} }
			-\cubicMinorsAt{ k+1 }{\vec{z}}
		\\
		=\cubicMinorsJacobianAt{k+1}{\vec{z}} . \brax{\vec{u}+\vec{v}}
		\nonumber
		+O_n\brax[\Big]{
		\supnorm{ \vec{u}+\vec{v}}^2 \supnorm{\cubicMinorsAt{k-1}{\vec{z}}}}
		\\
		+\dotsb
		+O_n\brax[\Big]{
		\supnorm{ \vec{u}+\vec{v} }^{k} \supnorm{\cubicMinorsAt{1}{\vec{z}}}
		+\supnorm{\vec{u}+\vec{v}}^{k+1}}.
		\end{multline*}
	It follows that
		\begin{multline}
			 \supnorm{\cubicMinorsJacobianAt{k+1}{\vec{z}} \vec{v}}
			 \ll_n{}
			 \supnorm{\cubicMinorsAt{ k+1 }{ \vec{y} }}
			 +\supnorm{\cubicMinorsAt{ k+1 }{\vec{z}}}
			 \\
			 +
				\supnorm{ \vec{u} } \supnorm{\cubicMinorsAt{k}{\vec{z}}}
				+\dotsb
				+
				\supnorm{ \vec{u} }^{k} \supnorm{\cubicMinorsAt{1}{\vec{z}}}
				+
				\supnorm{\vec{u}}^{k+1}
			\\
			+
				\supnorm{ \vec{v} }^2 \supnorm{\cubicMinorsAt{k-1}{\vec{z}}}
				+\dotsb
				+
				\supnorm{ \vec{v} }^{k} \supnorm{\cubicMinorsAt{1}{\vec{z}}}
				+
				\supnorm{\vec{v}}^{k+1}.
						\label{2.eqn:taylor_expansion_of_minors}
		\end{multline}
		
		Since $\vec{y},\vec{z}\in K_k\brax{ E_1,\dotsc,E_{k+1}},$ Lemma~\ref{2.lem:minors_and_eigenvalues} gives us the bounds
		\begin{align}
		\supnorm{\cubicMinorsAt{j}{\vec{z}}}
		&\asymp_n
		\prod_{i=1}^{j} E_i,
		\label{2.eqn:minors_and_E-sub-i}
		&
		\supnorm{\cubicMinorsAt{k+1}{\vec{y}}}
		&\asymp_n
		\prod_{i=1}^{k+1} E_i,
		\end{align}
		and since $\vec{z}\in K_k^{(C,V)}\brax{ E_1,\dotsc,E_{k+1}}$ it follows from \eqref{2.eqn:set_of_x_where_J_is_far_from_having_small_rank} that
		\begin{align}\label{2.eqn:Jacobian_and_E-sub-i}
		\supnorm{
			\cubicMinorsJacobianAt{ k+1 }{ \vec{z} }
			\vec{v}
		}
		&\gg_{n}
		C^{-1} \supnorm{ \vec{v} }
		\prod_{i=1}^k E_i.
		\end{align}
		Substituting \eqref{2.eqn:minors_and_E-sub-i}-\eqref{2.eqn:Jacobian_and_E-sub-i} into \eqref{2.eqn:taylor_expansion_of_minors} yields
		\begin{align*}
		C^{-1} \supnorm{ \vec{v} }
		\ll_n{}&
		\prod_{i=1}^{k+1}E_i+ \supnorm{\vec{v}}^2 \prod_{i=1}^{k-1}E_i + \dotsb +
			\supnorm{\vec{v}}^k E_1
			+
			\supnorm{\vec{v}}^{k+1} 
		\nonumber
		\\
		&+
		\supnorm{\vec{u}} \prod_{i=1}^{k}E_i + \dotsb +
			\supnorm{\vec{u}}^k E_1
			+\supnorm{\vec{u}}^{k+1}.
		\end{align*}
		Applying the bounds from \eqref{2.eqn:bounds_on_u_and_v} and  the inequalities $E_1 \geq \dotsb \geq  E_{k+1},$ we deduce that
		\begin{equation*}
		C^{-1}\supnorm{ \vec{v} }
		\ll_n
		\prod_{i=1}^{k+1} E_i+
		\epsilon\supnorm{ \vec{v} } \prod_{i=1}^{k} E_i.
		\end{equation*}
		Since $\epsilon$ is assumed to be small in terms of $C$ and $n,$ it follows that  $\supnorm{ \vec{v} }\ll_n CE_{k+1}$ holds and hence that $\supnorm{\vec{y}-\vec{z}}\ll_{C,n} E_{k+1}$ holds. By the comments after \eqref{2.eqn:def_of_A-sub-k}, this proves the lemma.
		
		It remains to consider the case when \eqref{2.eqn:case_E-sub-k+1_small} is false and so $E_{k+1} \geq C^{-1}E_k$ holds. At the start of the inductive step we supposed that~\ref{2.itm:union_of_cubes}$_{k-1}$ holds, so the set $K_{k-1}\brax{E_1,\dotsc,E_k}$ may be covered by $O_{C,n}(B^{\sigma}(E_1\dotsm E_k)E_k^{-\sigma-k})$ boxes of side $E_k$. We have
		\[
		K_{k}\brax{E_1,\dotsc,E_{k+1}}\subset K_{k-1}\brax{E_1,\dotsc,E_k},
		\]
		and so the set $K_{k}\brax{E_1,\dotsc,E_{k+1}}$ is also covered by this collection of boxes.  Since  $E_{k+1} \geq C^{-1}E_k$ holds, we can divide each of these boxes into $O_{C,n}(1)$ boxes of side $E_{k+1}$.  This proves~\ref{2.itm:union_of_cubes}$_{k}$.
\end{proof}

\section{Small values of a trilinear form}\label{2.sec:finding_spaces_X_and_Y}

Part~\ref{2.itm:finding_spaces_X_and_Y} of Davenport's argument from \S\ref{2.sec:outline} starts from a point $\vec{x}$ for which the matrices $\cubicMatrix$ and $\cubicMinorsJacobian{ b+1 }$ have prescribed ranks, and finds linear spaces $X, Y$ such that for all $\vec{X}\in X$ and $\vec{Y},\vec{Y}' \in Y$ the equation  $\vec{Y}^T H_c(\vec{X}) \vec{Y}' = 0$ holds.  Our analogue is the following pair of results, which give linear spaces on which the trilinear form $\vec{Y}^T H_c(\vec{X}) \vec{Y}'$ is small.


\begin{lemma}\label{2.lem:davenport's_trick}
	
	Let $c(\vec{x})$ be as in Definition~\ref{2.def:aux_ineq}, and let $\cubicEigenvalue{ i }$ and $\cubicMinorsJacobian{ i }$ be as in Definition~\ref{2.def:cubic_eigenvalues}. Suppose that $b \in \set{ 1,\dotsc,n-1 }$ and that $\vecsuper{x}{0}\in \bbR^n$. Then, provided  $\cubicMinorsAt{b}{\vecsuper{x}{0}}$ is nonzero, there exists an $(n-b)$-dimensional linear subspace $Y$ of $ \bbR^n$ such that for all $ \vec{Y},\vec{Y}'\in Y$ and all $\vec{t}\in\bbR^n$ we have
	\begin{equation}\label{2.eqn:davenport's_trick}
	\vec{Y}^T\cubicMatrixAt{\vec{t}}\vec{Y}'
	\ll_n
	\brax[\bigg]{
		\frac{ \supnorm{ \cubicMinorsJacobianAt{ b+1 }{ \vecsuper{x}{0} } \vec{t} } }{ \supnorm{ \cubicMinorsAt{ b }{ \vecsuper{x}{0} } } }
		+
		\frac{ \abs{ \cubicEigenvalueAt{ b+1 }{ \vecsuper{x}{0} } } \cdot\supnorm{ \vec{t} } }{ \abs{ \cubicEigenvalueAt{ b }{ \vecsuper{x}{0} } } }}
	\supnorm{\vec{Y}}\supnorm{\vec{Y}'}.
	\end{equation}
\end{lemma}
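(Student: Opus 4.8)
The plan is to take $Y$ to be a span of eigenvectors of $\cubicMatrixAt{\vecsuper{x}{0}}$ — the analogue, when $\cubicMatrixAt{\vecsuper{x}{0}}$ happens to have rank exactly $b$, of Davenport's choice $Y=\ker\cubicMatrixAt{\vecsuper{x}{0}}$ — and to make the estimate transparent by first rotating coordinates so that $\cubicMatrixAt{\vecsuper{x}{0}}$ is diagonal. Concretely: since $\cubicMatrixAt{\vecsuper{x}{0}}$ is real symmetric, fix an orthogonal matrix $O$ with $O^{T}\cubicMatrixAt{\vecsuper{x}{0}}O=\operatorname{diag}\brax{\cubicEigenvalueAt{1}{\vecsuper{x}{0}},\dotsc,\cubicEigenvalueAt{n}{\vecsuper{x}{0}}}$, let $Y$ be the span of the last $n-b$ columns of $O$, and put $G(\vec{t})\eqdef O^{T}\cubicMatrixAt{\vec{t}}O$; this depends linearly on $\vec{t}$ and is symmetric. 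Writing $\vec{Y}=O(\vec{0};\vec{w})$, $\vec{Y}'=O(\vec{0};\vec{w}')$ with $\vec{w},\vec{w}'\in\bbR^{n-b}$ and $\normDoubleBar{\vec{w}}_{2}=\normDoubleBar{\vec{Y}}_{2}$, we get $\vec{Y}^{T}\cubicMatrixAt{\vec{t}}\vec{Y}'=\vec{w}^{T}\Sigma(\vec{t})\vec{w}'$, where $\Sigma(\vec{t})$ is the bottom-right $(n-b)\times(n-b)$ block of $G(\vec{t})$. Hence it is enough to bound each entry $\Sigma(\vec{t})_{ij}$ ($i,j\in\set{b+1,\dotsc,n}$) by $O_{n}$ of the bracketed expression in \eqref{2.eqn:davenport's_trick}: then $\normDoubleBar{\Sigma(\vec{t})}_{\mathrm{op}}$ is bounded by the same, and the estimate for $\vec{Y}^{T}\cubicMatrixAt{\vec{t}}\vec{Y}'$ follows from $\abs{\vec{w}^{T}\Sigma(\vec{t})\vec{w}'}\leq\normDoubleBar{\Sigma(\vec{t})}_{\mathrm{op}}\normDoubleBar{\vec{Y}}_{2}\normDoubleBar{\vec{Y}'}_{2}$.

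Let $P\eqdef\operatorname{diag}\brax{\cubicEigenvalueAt{1}{\vecsuper{x}{0}},\dotsc,\cubicEigenvalueAt{b}{\vecsuper{x}{0}}}$, the leading $b\times b$ block of $G(\vecsuper{x}{0})$. By the hypothesis $\cubicMinorsAt{b}{\vecsuper{x}{0}}\neq\vec{0}$ and Lemma~\ref{2.lem:minors_and_eigenvalues}\ref{2.itm:minors_and_eigenvalues}, $P$ is invertible, $\abs{\det P}\asymp_{n}\supnorm{\cubicMinorsAt{b}{\vecsuper{x}{0}}}$, and $\supnorm{P^{-1}}=\abs{\cubicEigenvalueAt{b}{\vecsuper{x}{0}}}^{-1}$; also the Schur complement of $P$ in $G(\vecsuper{x}{0})$ equals $\operatorname{diag}\brax{\cubicEigenvalueAt{b+1}{\vecsuper{x}{0}},\dotsc,\cubicEigenvalueAt{n}{\vecsuper{x}{0}}}$. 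For $i,j\in\set{b+1,\dotsc,n}$ let $N_{ij}(\vec{x})$ be the minor of $G(\vec{x})$ on rows $\set{1,\dotsc,b,i}$ and columns $\set{1,\dotsc,b,j}$. Applying the Schur determinant formula to the corresponding submatrix of $G(\vecsuper{x}{0})+sG(\vec{t})$ and differentiating at $s=0$ — using that the off-diagonal blocks of that submatrix vanish at $s=0$ (as $G(\vecsuper{x}{0})$ is diagonal) and Jacobi's formula for $\tfrac{d}{ds}\det$ — yields the identity
\[
\Sigma(\vec{t})_{ij}
=
\frac{\grad N_{ij}(\vecsuper{x}{0})\cdot\vec{t}}{\det P}
-
\operatorname{tr}\brax[\big]{P^{-1}\brax{G(\vec{t})}_{1\leq k,\ell\leq b}}\,\cubicEigenvalueAt{i}{\vecsuper{x}{0}}\,\delta_{ij}.
\]
Since $N_{ij}$ is one of the $(b+1)\times(b+1)$ minors of $G(\vec{x})$, and by Lemma~\ref{2.lem:matrices_of_minors} the matrix of those minors is $\brax{\minorsMatrix{O}{b+1}}^{T}\minorsMatrix{\brax{\cubicMatrixAt{\vec{x}}}}{b+1}\minorsMatrix{O}{b+1}$ with $\minorsMatrix{O}{b+1}$ orthogonal, we have $\abs{\grad N_{ij}(\vecsuper{x}{0})\cdot\vec{t}}\ll_{n}\supnorm{\cubicMinorsJacobianAt{b+1}{\vecsuper{x}{0}}\vec{t}}$; combining this with $\abs{\det P}\asymp_{n}\supnorm{\cubicMinorsAt{b}{\vecsuper{x}{0}}}$, with $\abs{\operatorname{tr}\brax{P^{-1}\brax{G(\vec{t})}_{1\le k,\ell\le b}}}\ll_{n}\abs{\cubicEigenvalueAt{b}{\vecsuper{x}{0}}}^{-1}\supnorm{\vec{t}}$ (which uses $\supnorm{\cubicMatrixAt{\vec{t}}}\ll_{n}\supnorm{\vec{t}}$, cf.\ \eqref{2.eqn:lambda-sub-1_<<_B}), and with $\abs{\cubicEigenvalueAt{i}{\vecsuper{x}{0}}}\leq\abs{\cubicEigenvalueAt{b+1}{\vecsuper{x}{0}}}$ for $i\geq b+1$, gives the desired bound on $\abs{\Sigma(\vec{t})_{ij}}$, and hence the lemma.

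I expect the main obstacle to be the reduction to the diagonal case itself. One cannot replace it by a choice of coordinate subspace: the largest $b\times b$ minor of $\cubicMatrixAt{\vecsuper{x}{0}}$ need not be a principal minor — for instance $\cubicMatrixAt{\vecsuper{x}{0}}$ could be the all-ones matrix with $b=1$ — so $\cubicMatrixAt{\vecsuper{x}{0}}$ may be large on every coordinate subspace, and conjugating by a suitable $O$ is genuinely required. The attendant bookkeeping — that $\supnorm{\cdot}$, the eigenvalues of $\cubicMatrixAt{\cdot}$, the norm $\supnorm{\cubicMinorsAt{b}{\vecsuper{x}{0}}}$, and the norm $\supnorm{\cubicMinorsJacobianAt{b+1}{\vecsuper{x}{0}}\vec{t}}$ all change by at most $O_{n}(1)$ factors under this conjugation, via Lemma~\ref{2.lem:matrices_of_minors} and the orthogonality of $\minorsMatrix{O}{b+1}$, and that $G$ remains linear in its argument — is routine but has to be carried out; by contrast the Schur-complement identity above is a short direct computation once $G(\vecsuper{x}{0})$ has been diagonalised.
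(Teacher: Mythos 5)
Your argument is correct, and it is a genuinely different route from the paper's. The paper follows Davenport directly: it constructs explicit polynomial vectors $\vecsuper{y}{i}(\vec{x})$ whose entries are (signed) $b\times b$ minors of $\cubicMatrix$ and which satisfy $\cubicMatrix\vecsuper{y}{i}(\vec{x})=M^{(i)}\cubicMinors{b+1}$, differentiates this identity along $\vec{t}$, premultiplies by $\vecsuper{y}{j}(\vec{x})^T$, and takes $Y$ to be the span of the values $\vecsuper{y}{i}(\vecsuper{x}{0})/\supnorm{\cubicMinorsAt{b}{\vecsuper{x}{0}}}$; the linear independence of these vectors, and the control of coordinates with respect to them, then require a separate argument (the unipotent matrix $Q$, after permuting so that the largest $b\times b$ minor sits in the lower right corner). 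You instead diagonalise $\cubicMatrixAt{\vecsuper{x}{0}}$ orthogonally, take $Y$ to be the span of the eigenvectors belonging to $\cubicEigenvalueAt{b+1}{\vecsuper{x}{0}},\dotsc,\cubicEigenvalueAt{n}{\vecsuper{x}{0}}$, and prove a first-order perturbation identity (Jacobi/Schur) expressing the relevant block entries of $O^T\cubicMatrixAt{\vec{t}}O$ as $\grad N_{ij}(\vecsuper{x}{0})\cdot\vec{t}/\det P$ minus the trace correction $\operatorname{tr}\brax{P^{-1}(G(\vec{t}))_{1\leq k,\ell\leq b}}\cubicEigenvalueAt{i}{\vecsuper{x}{0}}\delta_{ij}$; I checked this identity and it is right, as is the transfer of the minor-derivative bound back to $\supnorm{\cubicMinorsJacobianAt{b+1}{\vecsuper{x}{0}}\vec{t}}$ via Lemma~\ref{2.lem:matrices_of_minors} and the orthogonality of $\minorsMatrix{O}{b+1}$, with both arguments ultimately leaning on Lemma~\ref{2.lem:minors_and_eigenvalues}\ref{2.itm:minors_and_eigenvalues} for $\abs{\det P}\asymp_n\supnorm{\cubicMinorsAt{b}{\vecsuper{x}{0}}}$. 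What your version buys is that linear independence and the coefficient bound for elements of $Y$ come for free from orthonormality, eliminating the paper's $Q$-matrix and permutation bookkeeping, and the error terms are structurally transparent; the price is the conjugation bookkeeping you flag, plus the fact that your $Y$ is defined spectrally rather than by explicit polynomial formulas in the coefficients of $c$ — which is harmless here, since the lemma only asserts existence of $Y$, though it is worth noting that Davenport's original construction is designed to produce rational spaces in the equation setting, a feature this paper no longer needs.
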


By setting  $b=\rank\cubicMatrixAt{\vecsuper{x}{0}}$ and $X=\ker \cubicMinorsJacobianAt{b+1}{\vecsuper{x}{0}},$ one may recover Davenport's result. We prove Lemma~\ref{2.lem:davenport's_trick} at the end of this section, after deducing


\begin{corollary}\label{2.cor:applying_davenport's_trick}	
	Let $c,$ $\cubicMatrix$ and  $\auxIneqOfCNumSolns(B)$ be as in Definition~\ref{2.def:aux_ineq}. For any $B,C \geq 1$ and any $\sigma\in \set{ 0,\dotsc,n-1 },$ one of the following alternatives holds. Either
	\begin{equation}\label{2.eqn:bound_on_aux_ineq}
	\auxIneqOfCNumSolns(B) \ll_{C,n} B^{n+\sigma}\brax{\log B}^n,
	\end{equation}
	or there exist positive-dimensional linear subspaces $X$ and $Y$ of $ \bbR^n$ for which $\affdim X +\affdim Y = n+\sigma+1,$ such that
	\begin{align}\label{2.eqn:spaces_X,Y_on_which_H_small}
	\abs{\vec{Y}^{T}\cubicMatrixAt{\vec{X}}\vec{Y}'}
	&\ll_n
	C^{-1}
	\supnorm{\vec{Y}}\supnorm{\vec{X}}\supnorm{\vec{Y}'}
	&\text{for all }\vec{X}\in X, \,\vec{Y},\vec{Y}'\in Y.
	\end{align}
\end{corollary}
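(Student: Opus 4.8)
The plan is to combine the point-counting apparatus of Corollary~\ref{2.cor:many_x_with_prescribed_eigenvalues}, the covering estimate of Lemma~\ref{2.lem:union_of_cubes}, and the trilinear-form estimate of Lemma~\ref{2.lem:davenport's_trick}. Suppose \eqref{2.eqn:bound_on_aux_ineq} fails, so that $\auxIneqOfCNumSolns(B) \gg_{C,n} B^{n+\sigma}(\log B)^n$ with a sufficiently large implicit constant (to be chosen in terms of $C$ and $n$). First I would apply Corollary~\ref{2.cor:many_x_with_prescribed_eigenvalues}. The first alternative \eqref{2.eqn:many_x_with_H_small} gives $\#(\bbZ^n \cap K_0(1)) \gg_n B^{\sigma}(\log B)^{-n}\cdot(\text{large constant}) \gg B^{\sigma}$, which contradicts Lemma~\ref{2.lem:union_of_cubes} with $k=0$, $E_1=1$ unless alternative~\ref{2.itm:c_close_to_a_cone} of that lemma holds — and \ref{2.itm:c_close_to_a_cone} directly furnishes a $(\sigma+1)$-dimensional space $X$ with $\supnorm{\cubicMatrixAt{\vec X}} \leq C^{-1}\supnorm{\vec X}$, from which \eqref{2.eqn:spaces_X,Y_on_which_H_small} follows by taking $Y = \bbR^n$ (then $\affdim X + \affdim Y = \sigma+1+n$ as required). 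Otherwise we are in alternative \eqref{2.eqn:many_x_with_prescribed_eigenvalues} or \eqref{2.eqn:many_x_with_all_eigenvalues_prescribed}: there is some $k \in \set{1,\dotsc,n-1}$ (or $k=n-1$) and dyadic parameters $E_i = 2^{e_i}$ with $\log B \gg_n e_1 \geq \dotsb \geq e_k$ such that
\[
\#\bigl(\bbZ^n \cap K_k(E_1,\dotsc,E_k,1)\bigr)
\gg_n
\frac{2^{e_1+\dotsb+e_k}\auxIneqOfCNumSolns(B)}{B^n(\log B)^n}
\gg
E_1\dotsm E_k \cdot B^{\sigma}
\]
again with a large implicit constant.

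Next I would feed this into Lemma~\ref{2.lem:union_of_cubes} with $E_{k+1}=1$. The bound \eqref{2.eqn:integer_points_in_E-cubes} reads $\#(\bbZ^n \cap K_k(E_1,\dotsc,E_k,1)) \ll_{C,n} B^{\sigma}(E_1\dotsm E_k)$, which is incompatible with the lower bound just obtained (choosing the constant in the failure of \eqref{2.eqn:bound_on_aux_ineq} large enough). Hence alternative~\ref{2.itm:some_Jacobian_close_to_having_small_rank} or~\ref{2.itm:c_close_to_a_cone} of Lemma~\ref{2.lem:union_of_cubes} must hold. Case~\ref{2.itm:c_close_to_a_cone} is handled exactly as above. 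So assume~\ref{2.itm:some_Jacobian_close_to_having_small_rank}: there are an integer $b$ with $1 \leq b \leq k$, a point $\vecsuper{x}{0} \in K_b(E_1,\dotsc,E_{b+1})$, and a $(\sigma+b+1)$-dimensional subspace $X$ of $\bbR^n$ with $E_{b+1} < C^{-1}E_b$ and
\[
\supnorm{\cubicMinorsJacobianAt{b+1}{\vecsuper{x}{0}}\vec X}
\leq
C^{-1}\supnorm{\cubicMinorsAt{b}{\vecsuper{x}{0}}}\supnorm{\vec X}
\qquad\text{for all }\vec X \in X.
\]
Since $\vecsuper{x}{0} \in K_b(E_1,\dotsc,E_{b+1})$, we have $\abs{\cubicEigenvalueAt{b}{\vecsuper{x}{0}}} > \tfrac12 E_b$ and $\abs{\cubicEigenvalueAt{b+1}{\vecsuper{x}{0}}} \leq E_{b+1} < C^{-1}E_b$, so $\abs{\cubicEigenvalueAt{b+1}{\vecsuper{x}{0}}}/\abs{\cubicEigenvalueAt{b}{\vecsuper{x}{0}}} \ll C^{-1}$; in particular $\cubicEigenvalueAt{b}{\vecsuper{x}{0}} \neq 0$, so by Lemma~\ref{2.lem:minors_and_eigenvalues}\ref{2.itm:minors_and_eigenvalues} the minor vector $\cubicMinorsAt{b}{\vecsuper{x}{0}}$ is nonzero and Lemma~\ref{2.lem:davenport's_trick} applies.

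Then I would invoke Lemma~\ref{2.lem:davenport's_trick} at this $\vecsuper{x}{0}$ and $b$: it yields an $(n-b)$-dimensional subspace $Y \subset \bbR^n$ such that for all $\vec Y,\vec Y' \in Y$ and all $\vec t \in \bbR^n$,
\[
\vec Y^T \cubicMatrixAt{\vec t}\vec Y'
\ll_n
\biggl(
\frac{\supnorm{\cubicMinorsJacobianAt{b+1}{\vecsuper{x}{0}}\vec t}}{\supnorm{\cubicMinorsAt{b}{\vecsuper{x}{0}}}}
+
\frac{\abs{\cubicEigenvalueAt{b+1}{\vecsuper{x}{0}}}\supnorm{\vec t}}{\abs{\cubicEigenvalueAt{b}{\vecsuper{x}{0}}}}
\biggr)
\supnorm{\vec Y}\supnorm{\vec Y'}.
\]
Restricting $\vec t = \vec X$ to the space $X$ from~\ref{2.itm:some_Jacobian_close_to_having_small_rank}, the first bracketed term is $\leq C^{-1}\supnorm{\vec X}$ by construction, and the second is $\ll C^{-1}\supnorm{\vec X}$ by the eigenvalue ratio bound above. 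Hence \eqref{2.eqn:spaces_X,Y_on_which_H_small} holds for these $X$ and $Y$, and the dimension count gives $\affdim X + \affdim Y = (\sigma+b+1)+(n-b) = n+\sigma+1$, as required. Both $X$ and $Y$ are positive-dimensional since $\sigma+b+1 \geq 2$ and $n-b \geq 1$. The main obstacle — and the only place real work is needed beyond bookkeeping — is making the constant chains consistent: one must fix the large implicit constant in the negation of \eqref{2.eqn:bound_on_aux_ineq} last, after seeing the $O_{C,n}(\cdot)$ constants produced by Corollary~\ref{2.cor:many_x_with_prescribed_eigenvalues} and Lemma~\ref{2.lem:union_of_cubes}, so that the lower and upper bounds on $\#(\bbZ^n\cap K_k(\dotsc))$ genuinely contradict each other; and one must check that the constant $C$ appearing in \eqref{2.eqn:spaces_X,Y_on_which_H_small} is the \emph{same} $C$ fed into Lemma~\ref{2.lem:union_of_cubes} and Lemma~\ref{2.lem:davenport's_trick}, which it is, up to the harmless $\ll_n$ absorbed into the statement.
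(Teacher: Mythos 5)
Your proposal is correct and takes essentially the same route as the paper: alternative~\ref{2.itm:union_of_cubes} of Lemma~\ref{2.lem:union_of_cubes} is played off against the counting lower bounds from Corollary~\ref{2.cor:many_x_with_prescribed_eigenvalues}, alternative~\ref{2.itm:c_close_to_a_cone} yields $X$ with $Y=\bbR^n$, and alternative~\ref{2.itm:some_Jacobian_close_to_having_small_rank} is fed into Lemma~\ref{2.lem:davenport's_trick} exactly as in the paper, with the same dimension count $(\sigma+b+1)+(n-b)=n+\sigma+1$. The one point to tidy (present equally in the paper's own write-up) is that the $k$ produced by Corollary~\ref{2.cor:many_x_with_prescribed_eigenvalues} may exceed $n-\sigma-1$ and so lie outside the range permitted in Lemma~\ref{2.lem:union_of_cubes}; one then replaces $K_k$ by the larger set $K_{n-\sigma-1}\brax{2^{e_1},\dotsc,2^{e_{n-\sigma}}}$ containing it, which weakens the resulting upper bound only by the factor $\prod_{i>n-\sigma}2^{e_i}\geq 1$ and so leaves your contradiction (and the passage to alternatives~\ref{2.itm:some_Jacobian_close_to_having_small_rank} or~\ref{2.itm:c_close_to_a_cone}) intact.
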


\begin{proof}
	We saw in Lemma~\ref{2.lem:union_of_cubes} that for any $k\in\set{0,\dotsc,n-\sigma-1}$ and any $E_1,\dotsc,E_{k+1}\in\bbR$ satisfying
	\[
	CB \geq E_1 \geq \dotsb \geq E_{k+1}\geq 1,
	\]
	one of~\ref{2.itm:union_of_cubes}$_{k}$,~\ref{2.itm:some_Jacobian_close_to_having_small_rank}$_{k}$, or~\ref{2.itm:c_close_to_a_cone} must hold. Suppose first that in every case alternative~\ref{2.itm:union_of_cubes}$_{k}$ holds. By \eqref{2.eqn:integer_points_in_E-cubes}, we then have
	\begin{equation}\label{2.eqn:points_in_cubes_aux_ineq_proof}
	\#\setbrax{\bbZ^n\cap K_{k}\brax{E_1,\dotsc,E_{k+1}}}
	\ll_{C,n}
	B^{\sigma} (E_1 \dotsm E_{k+1}) E_{k+1}^{n-\sigma-k-1}
	\end{equation}
	for every $k\in\set{0,\dotsc,n-\sigma-1}$ and every $CB \geq E_1 \geq \dotsb \geq E_{k+1}\geq 1$. Now
	Corollary~\ref{2.cor:many_x_with_prescribed_eigenvalues} shows that either
	\begin{equation}\label{2.eqn:H_small_aux_ineq_proof}
	\frac{\auxIneqOfCNumSolns\brax{B}}{B^n \brax{\log B}^n} \ll_n \#\setbrax{\bbZ^n \cap K_0(1)},
	\end{equation}
	or
	\begin{equation}\label{2.eqn:prescribed_eigenvalues_aux_ineq_proof}
	\frac{ 2^{e_1+\dotsb+e_k} \auxIneqOfCNumSolns\brax{B}}{B^n \brax{\log B}^n}
	\ll_n
	\#\setbrax[\big]{ \bbZ^n \cap K_k\brax{2^{e_1}, \dotsc, 2^{e_k}, 1} },
	\end{equation}
	where $k\in \set{1,\dotsc,n-1}$ and the inequalities $B \gg_n 2^{e_1} \geq \dotsb \geq 2^{e_{k+1}}\geq 1$ hold, or
	\begin{equation}\label{2.eqn:all_eigenvalues_prescribed_aux_ineq_proof}
	\frac{ 2^{e_1+\dotsb+e_n} \auxIneqOfCNumSolns\brax{B}}{B^n \brax{\log B}^n}
	\ll_n
	\#\setbrax[\big]{ \bbZ^n \cap K_{n-1}\brax{2^{e_1}, \dotsc, 2^{e_n}} }
	\end{equation}
	where the inequalities $B \gg_n 2^{e_1} \geq \dotsb \geq 2^{e_{n}}\geq 1$ hold. We may assume that $C$ is sufficiently large in terms of $n,$ so that $CB \geq 2^{e_1}$ holds in \eqref{2.eqn:prescribed_eigenvalues_aux_ineq_proof}-\eqref{2.eqn:all_eigenvalues_prescribed_aux_ineq_proof}. Substituting  the bound \eqref{2.eqn:points_in_cubes_aux_ineq_proof} into each of \eqref{2.eqn:H_small_aux_ineq_proof}-\eqref{2.eqn:all_eigenvalues_prescribed_aux_ineq_proof} proves the conclusion \eqref{2.eqn:bound_on_aux_ineq}.
	
	Suppose next that alternative~\ref{2.itm:c_close_to_a_cone} holds in Lemma~\ref{2.lem:union_of_cubes}. In this case we let $Y=\bbR^n,$ and the conclusion \eqref{2.eqn:spaces_X,Y_on_which_H_small} follows from \eqref{2.eqn:c_close_to_a_cone_at_many_points}.
	
	It remains to treat the case when there exist $k\in\set{0,\dotsc,n-\sigma-1}$ and $CB \geq E_1 \geq \dotsb \geq E_{k+1}\geq 1$ such that alternative~\ref{2.itm:some_Jacobian_close_to_having_small_rank}$_{k}$ holds in  Lemma~\ref{2.lem:union_of_cubes}. This means that there exist an integer $1 \leq b \leq k,$ a point $\vecsuper{x}{0}\in K_{b}\brax{E_1,\dotsb,E_{b+1}},$ and a $(\sigma+b+1)$-dimensional linear subspace $X$ of $ \bbR^n$ such that
	\begin{align}
	\label{2.eqn:ith_minors_small_application}
	E_{b+1}
	&<
	C^{-1}E_{b},
	\intertext{and}
	\label{2.eqn:jacobian_small_application}
	\supnorm{\cubicMinorsJacobianAt{ b+1 }{ \vecsuper{x}{0} } \vec{X}}
	&\leq
	C^{-1}
	\supnorm{ \cubicMinorsAt{ b }{ \vecsuper{x}{0} } }\supnorm{ \vec{X} }
	&\text{for all } \vec{X}\in X.
	\end{align}
	
	Since $\vecsuper{x}{0}\in K_k\brax{ E_1,\dotsc, E_{k+1}},$ the inequalities $\frac{1}{2}E_i <  \cubicEigenvalueAt{ i }{ \vecsuper{ x }{ 0 } }\leq E_i$ hold. Therefore \eqref{2.eqn:ith_minors_small_application} implies
	\begin{equation}
	\cubicEigenvalueAt{b+1}{\vecsuper{x}{0}}
	<
	2C^{-1}
	\cubicEigenvalueAt{b}{\vecsuper{x}{0}},
	\label{2.eqn:lambda-sub-i_application}
	\end{equation}
	Note that \eqref{2.eqn:lambda-sub-i_application} implies that $\cubicEigenvalueAt{b}{\vecsuper{x}{0}}$ is nonzero, and so $\cubicMinorsAt{b}{\vecsuper{x}{0}}$ is nonzero, by Lemma~\ref{2.lem:minors_and_eigenvalues}. Hence we may apply Lemma~\ref{2.lem:davenport's_trick} to find an $(n-b)$-dimensional linear space $Y$ such that for all $\vec{Y},\vec{Y}'\in Y$ and all $\vec{t}\in\bbR^n$ the bound \eqref{2.eqn:davenport's_trick} holds. The conclusion \eqref{2.eqn:spaces_X,Y_on_which_H_small} follows on taking $\vec{t}=\vec{X}$ in  \eqref{2.eqn:davenport's_trick} of Lemma~\ref{2.lem:davenport's_trick} and substituting in the bounds \eqref{2.eqn:jacobian_small_application} and \eqref{2.eqn:lambda-sub-i_application}.
\end{proof}

\begin{proof}[Proof of Lemma~\ref{2.lem:davenport's_trick}]
	We imitate the proof of Lemma~3 in Davenport~\cite{davenportSixteen}, which begins by considering the following easy ``warm-up" problem. Suppose we were to look for $n-b$ linearly independent vectors $\vec{y}$ at which $\cubicMatrixAt{ \vecsuper{ x }{ 0 } }\vec{y}$ vanishes. One approach would be as follows. One can construct matrices $L^{(i)},$ $M^{(i)}$ for $i=1,\dotsc,n-b,$ with entries in $\set{ 0,\pm 1 },$ such that the vectors
	\begin{align}
	\label{2.eqn:def_of_y}
	\vecsuper{ y }{ i }\brax{ \vec{x} }
	&\eqdef
	L^{(i)}
	\cubicMinors{ b }
	\intertext{satisfy}
	\label{2.eqn:H(x)y}
	\cubicMatrix\vecsuper{y}{i}\brax{\vec{x}}
	&=
	M^{(i)}
	\cubicMinors{b+1}.
	\end{align}
	That is, the components of $\vecsuper{y}{i}(\vec{x})$ are polynomials of the form $\pm\Delta_j^{(b)}(\vec{x}),$ and the components of $\cubicMatrix\vecsuper{y}{i}(\vec{x})$ are polynomials of the form $\pm\Delta_j^{(b+1)}(\vec{x})$.
	
	If $\cubicMatrixAt{ \vecsuper{ x }{ 0 } }\vec{y}=\vec{0}$ had exactly $n-b$ linearly independent solutions $\vec{y},$ we would have $\cubicMinorsAt{b+1}{\vecsuper{x}{0}}=\vec{0},$ while  $\cubicMinorsAt{b}{\vecsuper{x}{0}}$ would be nonzero. We would then have $n-b$ solutions $\vecsuper{Y}{k}$ defined by
	\begin{align}\label{2.eqn:def_of_big_Y}
	\vecsuper{Y}{k}
	&\eqdef
	\frac{
		\vecsuper{ y }{ k }\brax{ \vecsuper{x}{0} }
	}{
		\supnorm{\cubicMinorsAt{ b }{\vecsuper{x}{0}}}
	}
	&(1 \leq k\leq n-b),
	\end{align}
and if we chose our matrices $L^{(i)}, M^{(i)}$ appropriately these would be linearly independent.
	
	We now return to the proof of the lemma. Assume for the time being that $L^{(i)},$ $M^{(i)},$ $\vecsuper{y}{i}(\vec{x})$ and $\vecsuper{Y}{i}$ satisfying \eqref{2.eqn:def_of_y}-\eqref{2.eqn:def_of_big_Y} are given, and let $\vecsuper{x}{0}$ be as in the lemma. Let $\vec{t}\in\bbR^n$. Let $\partial_{\vec{t}}$ be the directional derivative along $\vec{t}$ defined by  $\sum t_i \frac{\partial}{\partial x_i},$ and apply $\partial_{\vec{t}}$ to both sides of \eqref{2.eqn:H(x)y}. This shows that
	\begin{equation}\label{2.eqn:davenport's_trick_I}
	\sqbrax[\big]{\partial_{\vec{t}}
		\cubicMatrix}
	\vecsuper{ y }{ i }\brax{ \vec{x} }
	+
	\cubicMatrix \sqbrax[\big]{\partial_{\vec{t}} \vecsuper{ y }{ i }\brax{ \vec{x} }}
	=
	M^{(i)}
	\sqbrax[\big]{\partial_{\vec{t}}\cubicMinors{ b+1 }}.
	\end{equation}
	Since  $\partial_{\vec{t}}$ is a linear operator and we have
	\begin{equation*}
	\partial_{\vec{t}}  \cubicMinors{ k }
	=
	\cubicMinorsJacobian{ k }\vec{t},
	\end{equation*}
	 it follows from \eqref{2.eqn:davenport's_trick_I} that
	\begin{align}
	\cubicMatrixAt{ \vec{t} }
	\vecsuper{ y }{ i }\brax{ \vec{x} }
	={}&
	M^{(i)}
	\cubicMinorsJacobian{ b+1 }\vec{t}
	-\cubicMatrix
	L^{(i)} \partial_{\vec{t}} \cubicMinors{ b }.
	\nonumber
	\intertext{Premultiplying by $\vecsuper{ y }{ j }\brax{ \vec{x} }^T$ and using \eqref{2.eqn:H(x)y} gives}
	\vecsuper{ y }{ j }\brax{ \vec{x} }^T
	\cubicMatrixAt{ \vec{t} }
	\vecsuper{ y }{ i }\brax{ \vec{x} }
	={}&	
	\vecsuper{ y }{ j }\brax{ \vec{x} }^T
	M^{(i)}
	\cubicMinorsJacobian{ b+1 }\vec{t}
	\nonumber
	\\
	&-
	\sqbrax[\big]{M^{(i)}\cubicMinors{ b+1 }}^T 
	\sqbrax[\big]{ L^{(i)} \partial_{\vec{t}} \cubicMinors{ b } }.
	\label{2.eqn:davenport's_trick_II}
	\end{align}
	Now Lemma~\ref{2.lem:minors_and_eigenvalues} shows that
	\begin{equation*}
	\frac{\supnorm{ \cubicMinors{ b+1 } }}{\supnorm{ \cubicMinors{ b } }}
	\ll_n \abs{\cubicEigenvalue{ b+1 }},
	\qquad
	\frac{\supnorm{ \partial_{\vec{t}} \cubicMinors{ b } }}{\supnorm{ \cubicMinors{ b } }}
	\ll_n \frac{\supnorm{ \vec{t} }}{\abs{\cubicEigenvalue{ b}}}
	\end{equation*}
	and substituting these bounds into \eqref{2.eqn:davenport's_trick_II} gives
	\begin{equation}
	\vec{Y}^{(j)T}
	\cubicMatrixAt{ \vec{t} }
	\vecsuper{Y}{i}
	\ll_n
	\frac{
		\supnorm{\cubicMinorsJacobianAt{ b+1 }{\vecsuper{x}{0}}\vec{t}}
	}{ \supnorm{ \cubicMinorsAt{ b }{\vecsuper{x}{0}} } }
	+
	\frac{ \abs{\cubicEigenvalueAt{ b+1 }{\vecsuper{x}{0}}} \cdot \supnorm{\vec{t}} }{ \abs{\cubicEigenvalueAt{ b }{\vecsuper{x}{0}}} }
	\label{2.eqn:davenport's_trick_with_y_not_nec_orthonormal}
	\end{equation}
	where the $\vecsuper{Y}{k}$ are as in \eqref{2.eqn:def_of_big_Y}.
	
	The idea is now to let $Y$ be the span of the  $\vecsuper{Y}{k}$ and deduce \eqref{2.eqn:davenport's_trick} from \eqref{2.eqn:davenport's_trick_with_y_not_nec_orthonormal}. Since we are looking for an $(n-b)$-dimensional space $Y$ we will need $\vecsuper{Y}{1},\dotsc,\vecsuper{Y}{n-b}$ to be linearly independent. In order to prove \eqref{2.eqn:davenport's_trick} we require the following slightly stronger statement. We claim there are $L^{(i)},$ $M^{(i)},$ $\vecsuper{y}{i}\brax{\vec{x}}$ and $\vecsuper{Y}{i}$ satisfying \eqref{2.eqn:def_of_y}-\eqref{2.eqn:def_of_big_Y}, such that  the linear combination defined by $
	\vec{Y} = \sum_{i=1}^{n-b} \gamma_i \vecsuper{ Y }{ i }
	$ satisfies $
	\supnorm{\vec{\gamma}}\ll_n \supnorm{\vec{Y}}
	$ for every vector $
	\vec{\gamma} $ in real $(n-b)$-space. The lemma then follows, with $Y$ being the span of the $\vecsuper{ Y }{ i },$ on expressing $\vec{Y},\vec{Y}'$ as linear combinations of the $\vecsuper{Y}{i}$ and applying \eqref{2.eqn:davenport's_trick_with_y_not_nec_orthonormal}.
	
	For the remainder of the proof we will assume for simplicity that the $b\times b$ minor of $\cubicMatrixAt{\vecsuper{x}{0}}$ with largest absolute value is the minor in the lower right-hand corner, that is, we will assume that
	\begin{equation}\label{2.eqn:largest_minor_in_bottom_right}
	\supnorm{\cubicMinorsAt{b}{\vecsuper{x}{0}}}
	=
	\abs[\big]{ \det \brax[\big]{ \brax{\cubicMatrixAt{\vecsuper{x}{0}}_{k\ell}}_{\substack{k = n-b+1,\dotsc, n \\ \ell = n-b+1,\dotsc,n }} } }.
	\end{equation}
	In general \eqref{2.eqn:largest_minor_in_bottom_right} holds after permuting the rows and columns of the matrix $\cubicMatrix$  and one can then apply the same permutations throughout the rest of our construction of $\vecsuper{Y}{i},$ every time the matrix $\cubicMatrix$ appears.
	
	Define $\vecsuper{y}{1}\brax{\vec{x}},\dotsc,\vecsuper{y}{n-b}\brax{\vec{x}}$ by
	\begin{equation*}
	y^{ (i) }_j\brax{\vec{x}}
	\eqdef
	\begin{cases}
	(-1)^{n-b}\det \brax[\big]{ \brax{\cubicMatrix_{k\ell}}_{\substack{k = n-b+1,\dotsc, n \\ \ell = n-b+1,\dotsc,n }} }
	&\text{if } j=i,
	\\
	(-1)^{j}\det \brax[\big]{ \brax{\cubicMatrix_{k\ell}}_{\substack{k = n-b+1,\dotsc, n \\ \ell = i, n-b+1,\dotsc,n;\; \ell \neq j }} }
	&\text{if } j >n-b,
	\\
	0 &\text{otherwise}
	\end{cases}
	\end{equation*}
	where $(\ell =i, n-b+1,\dotsc,n;\; \ell \neq j )$ means that $\ell$ first takes the value $i$ and then runs over the numbers $n-b+1,\dotsc,n$ with $j$ omitted. Now this is of the form \eqref{2.eqn:def_of_y}, and one can check that
	\begin{equation*}
	(\cubicMatrix\vecsuper{ y }{ i }\brax{\vec{x}})_j
	=
	\begin{cases}
	(-1)^{n-b}\det \brax[\big]{ \brax{\cubicMatrix_{k\ell} }_{\substack{k = j,n-b+1,\dotsc, n \\ \ell = i,n-b+1,\dotsc,n }} }
	&\text{if } j \leq n-b
	\\
	0
	&\text{otherwise}
	\end{cases}		
	\end{equation*}
	which is of the form \eqref{2.eqn:H(x)y}.
	Define a matrix $Q$ by
	\[
	Q\eqdef
	\left(
	\begin{array}{@{}c|c|c|c|c|c@{}}
	\vecsuper{Y}{1}
	&\cdots 
	&\vecsuper{Y}{n-b}
	&\vecsuper{e}{n-b+1}
	&\cdots
	&\vecsuper{e}{n}
	\end{array}
	\right),
	\]
	or equivalently by
	\[
	Q
	= 	
	\left(
	\begin{array}{@{}c|c|c|c|c|c@{}}
	\frac{\vecsuper{y}{1}(\vecsuper{x}{0})}{\supnorm{\cubicMinorsAt{b}{\vecsuper{x}{0}}}}
	&\cdots
	&\frac{\vecsuper{y}{n-b}(\vecsuper{x}{0})}{\supnorm{\cubicMinorsAt{b}{\vecsuper{x}{0}}}}
	& \vecsuper{e}{n-b+1}
	& \cdots 
	&\vecsuper{e}{n}
	\end{array}
	\right),
	\]
so that the entries $Q_{ij}$ have absolute value at most 1. Then one sees from \eqref{2.eqn:largest_minor_in_bottom_right} that
	\begin{equation*}
	Q
	=
	\begin{pmatrix}
	I_{n-b} & 0_{b\times b}
	\\
	\widetilde{Q} & I_b 
	\end{pmatrix},
	\end{equation*}
	where $\widetilde{Q}$ is some $(n-b)\times (n-b)$ matrix. In particular $\det Q = 1,$ and so the entries of ${Q^{-1}}$ are bounded in terms of $n$. It follows that if $\vec{Y} = \sum_{i=1}^{n-b} \gamma_i \vecsuper{ Y }{ i }$ then $\gamma_i = \brax{Q^{-1}\vec{Y}}_i \ll_n \supnorm{\vec{Y}},$ as claimed.
\end{proof}

\section{Constructing singular points on $V(c)$}\label{2.sec:cubic_aux_ineq_prop}

Corollary~\ref{2.cor:applying_davenport's_trick} shows that either $\auxIneqOfCNumSolns\brax{ B }$ is small, or there are spaces $X,Y$ of large dimension on which $\vec{Y}^{T}H_{c}\brax{\vec{X}}\vec{Y}'$ is small. To prove Proposition~\ref{2.prop:cubic_aux_ineq} we show that the second alternative implies that $V(c)$ is singular. This is our analogue of Davenport's step~\ref{2.itm:contructing_solutions_with_spaces_X_and_Y}, as described in \S\ref{2.sec:outline}.

\begin{proof}[Proof of Proposition~\ref{2.prop:cubic_aux_ineq}]
	Suppose for a contradiction that the result is false. Then for every $N\in \bbN$ there is $c_N \in \calK$ with
	\begin{equation*}
	\auxIneqOfCNumSolns\brax{ B } \geq N B^{n+\sigma_\calK}\brax{\log B}^n.
	\end{equation*}
	By Corollary~\ref{2.cor:applying_davenport's_trick}, this implies that there are linear subspaces $X_N,Y_N$ of $\bbR^n$ such that
	\[
	\affdim X_N+\affdim Y_N = n+\sigma_\calK+1
	\]
	holds and for all $\vec{X}\in X_N$ and $\vec{Y},\vec{Y}'\in Y_N,$ we have
	\begin{equation*}
	\abs{\vec{Y}^{T}H_{c_N}\brax{\vec{X}}\vec{Y}'}
	\leq
	N^{-1}
	\supnorm{\vec{Y}}\supnorm{\vec{X}}\supnorm{\vec{Y}'}.
	\end{equation*}
	If we multiply $c_N$ by a constant then the matrix $H_{c_N}\brax{\vec{x}}$ does not change. So we may assume that for each $N$ the equality $\supnorm{c_N}=1$ holds. After passing to a subsequence we have $c_N\to c$ as $N\to\infty,$ and it follows that there are subspaces  $X,Y$ of $\bbR^n$ such that $\affdim X+\affdim Y = n+\sigma_\calK+1$ and	
%
	\begin{align}
	\vec{Y}^{T}H_{c}\brax{\vec{X}}\vec{Y}'
	&=0
	&\text{for all }
	\vec{X}\in X,\,\vec{Y},\vec{Y}'\in Y.
	\label{2.eqn:X,_Y_on_which_c_vanishes}
	\end{align}
Let $b \in \set{0,\dotsc,n-\sigma-1}$ such that
	\begin{align*}
	\affdim X &=n-b,
	&
	\affdim Y &=\sigma_\calK+b+1.
	\end{align*}
	Let $\vecsuper{x}{1},\dotsc, \vecsuper{x}{n}$ be a basis of $\bbR^n$ such that $\vecsuper{x}{b+1},\dotsc,\vecsuper{x}{n}$ is a basis of $X$.
	
	Let $\sqbrax{Y}$ be the projective linear space in $\bbP_{\bbR}^{n-1}$ associated to $Y $. Take homogeneous co-ordinates $\vec{y}$ on $\sqbrax{Y},$ so that $\vec{y}$ takes values in $Y$.
	
	Let $W$ be the projective variety cut out in $\sqbrax{Y}$ by the $b$ equations
	\begin{align}
	W:\vec{y}^T \cubicMatrixAt{\vecsuper{x}{i}} \vec{y}
	&= 0
	&\brax{i= 1,\dotsc, b },
	\label{2.eqn:constructing_points_on_sing_V}
	\end{align}
	so that	
	\[
	\projdim W \geq \projdim \sqbrax{Y} -b = \sigma_\calK.
	\]
	We claim that $W$ is contained in the singular locus of the projective hypersurface $V(c)$. It follows that $\projdim\sing V(c) \geq \sigma_\calK,$ which is a contradiction, by \eqref{2.eqn:def_of_sigma-sub-calK}.
	
	Now \eqref{2.eqn:X,_Y_on_which_c_vanishes} implies that for every $\vec{y}\in Y$ we have
	\begin{align*}
	\vec{y}^T \cubicMatrixAt{\vecsuper{x}{i}} \vec{y}
	&=0
	&(i=b+1,\dotsc,n).
	\end{align*}
	So if we let $\vec{y}\in Y$ such that \eqref{2.eqn:constructing_points_on_sing_V} holds, then we have
	\begin{align*}
	\vec{y}^T \cubicMatrix \vec{y}
	&=0
	&\text{for all }\vec{x}\in \bbR^n.
	\end{align*}
	This implies that $\grad_{\vec{y}}c(\vec{y}) = \vec{0}$ holds, by the definition \eqref{2.eqn:def_of_H}.  It follows that every point of $W$ is contained in $\sing V(c),$ as claimed.
\end{proof}

\section{Acknowledgements}

This work is based on a DPhil thesis submitted to Oxford University, and was supported by EPSRC grants EP/J500495/1 and EP/M507970/1. I would like to thank my DPhil supervisor, Roger Heath-Brown. I am grateful to Rainer Dietmann for helpful conversations.

\bibliography{systems-of-many-forms}

\end{document}